\documentclass[11pt]{article} 
\usepackage{amsmath,amsthm} 
\usepackage{amssymb,latexsym}
\usepackage[T1]{fontenc}
\usepackage{authblk}
\usepackage{lipsum}
\usepackage{hyperref}
\usepackage{lipsum} 
\usepackage{lineno}
\usepackage{upgreek}
\usepackage{tikz}
\usetikzlibrary{shapes.geometric}
\usetikzlibrary{positioning}
\usetikzlibrary{arrows.meta}
\modulolinenumbers[1]
\begin{document}
	\newtheorem{theorem}{Theorem}[section]
	\newtheorem{question}{Question}[section]
	\newtheorem{thm}[theorem]{Theorem}
	\newtheorem{lem}[theorem]{Lemma}
	\newtheorem{eg}[theorem]{Example}
	\newtheorem{prop}[theorem]{Proposition}
	\newtheorem{cor}[theorem]{Corollary}
	\newtheorem{rem}[theorem]{Remark}
	\newtheorem{deff}[theorem]{Definition}
	\numberwithin{equation}{section}
	
	\title{Cohen, Levitzki, Hilbert Basis, and Lasker-Noether Theorems for Nil-$S$-Noetherian Rings}
	
	\author[1]{Aman Pandey}
	\author[2]{Ajim Uddin Ansari}
	\author[3]{Tushar Singh}

	\affil[1, 2]{\small Department of Mathematics, CMP Degree College, University of Allahabad, Prayagraj-211002, India \vskip0.01in Emails: amanpandey1931@gmail.com, 
		ajimmatau@gmail.com}
	\vskip0.05in
	\affil[3]{\small Department of Mathematics, Motilal Nehru National Institute of Technology Allahabad, Prayagraj 211004, India \vskip0.01in Emails: sjstusharsingh0019@gmail.com, tushar.2021rma11@mnnit.ac.in }
	\date{}
\maketitle
	\hrule
	\maketitle
	\hrule
	
\begin{abstract}
\noindent

In this paper, we introduce a new class of rings called Nil-$S$-Noetherian rings, which generalizes both $S$-Noetherian rings and $Nil_{*}$-Noetherian rings. We investigate several properties of this new class and establish generalized versions of some classical results, including Cohen's theorem, Levitzki's theorem, and Hilbert's basis theorem. Furthermore, we prove $S$-version of  classical Lasker-Noether theorem for Nil-$S$-Noetherian rings.
\end{abstract}

	\textbf{Keywords:} Nil$_{*}$-Noetherian ring, Nil-$S$-Noetherian ring, $S$-Noetherian ring, $S$-Primary decomposition. \\
	\textbf{MSC(2020):} 13B02, 13C05, 13C60, 13E05.
	\hrule
	
\section{Introduction}
The theory of Noetherian rings has been playing an important role in the development of structure theory of commutative rings. One of the roots of this theory is the historical article \cite{ne21} by Noether in 1921. Recall that a ring is called  Noetherian if it satisfies ascending chain condition on ideals. In the past few decades, many mathematicians have investigated how the Noetherian property can be applied to various classes of rings and have attempted to generalize the concept of Noetherian rings because of its importance (see \cite{ah18}, \cite{ad02}, \cite{badawi03}, \cite{jw16},  \cite{zb17}, \cite{nonnil20}, \cite{ts23}, \cite{ts24}, and \cite{ts25}).  Nil$_{*}$-Noetherian rings,  Nonnil-Noetherian rings, $S$-Noetherian rings, and Nonnil-$S$-Noetherian rings are typical generalizations of Noetherian rings.

  In 2002, Anderson and Dumitrescu \cite{ad02} introduced the concept of $S$-Noetherian rings. Let $R$ be a commutative ring and $S$ be a multiplicatively closed subset of $R$. An ideal $I$ of $R$ is said to be $S$-finite if there exists $s \in S$ such that $sI \subseteq J \subseteq I$ for some finitely generated ideal $J$ of $R$. A ring $R$ is called $S$-Noetherian if every ideal of $R$ is $S$-finite. Anderson and Dumitrescu extended several classical results on Noetherian rings to $S$-Noetherian rings, including Cohen's theorem and Hilbert basis theorem.
  In 2003, Badawi \cite{badawi03} introduced the concept of nonnil-Noetherian rings. An ideal $I$ of $R$ is called nonnil if $I \nsubseteq Nil(R)$, where $Nil(R)$ denotes the nilradical of $R$. A ring $R$ is called nonnil-Noetherian if every nonnil ideal of $R$ is finitely generated. 
   In 2020, Kwon and Lim \cite{nonnil20} introduced the concept of nonnil-$S$-Noetherian rings as a generalization of both $S$-Noetherian rings and nonnil-Noetherian rings. A ring $R$ is called nonnil-$S$-Noetherian if every nonnil ideal of $R$ is $S$-finite.
  Motivated by these notions, Xiaolei \cite{zx22} introduced another generalization of Noetherian rings called $Nil_{*}$-Noetherian rings. An ideal $I$ of $R$ is called a nil ideal if $I \subseteq Nil(R)$. A ring $R$ is called a $Nil_{*}$-Noetherian ring if every nil ideal of $R$ is finitely generated. It is clear that a ring is Noetherian if and only if it is both nonnil-Noetherian and $Nil_{*}$-Noetherian. Xiaolei also generalized several classical results of Noetherian rings to $Nil_{*}$-Noetherian rings, including the Cartan--Eilenberg--Bass theorem and Hilbert basis theorem.
  
   The main  purpose of this paper is to introduce and study  $S$-version of Nil$_{*}$-Noetherian rings, which we  call Nil-$S$-Noetherian ring.
   	We say that a ring $R$ is  Nil-$S$-Noetherian if each nil ideal of $R$ is $S$-finite.
    It is clear that every Noetherian ring is $S$-Noetherian, every $Nil_{*}$-Noetherian ring is Nil-$S$-Noetherian, and every $S$-Noetherian ring is Nil-$S$-Noetherian. The following diagram illustrates these implications. However, the converse of each implication does not hold in general (see Examples~\ref{n12} and~\ref{n13}).
\begin{center}

	\begin{tikzpicture}[
	node distance = 2cm and 3cm,
	every node/.style = {rectangle, draw, minimum height=0.8cm, minimum width=2.5cm, align=center},
	arrow/.style = {-{Stealth[scale=1.2]}, thick}
	]
	
	\node (noetherian)       {Noetherian};
	\node (nilnoetherian)    [right=of noetherian] {Nil$_{*}$-Noetherian};
	\node (snoetherian)      [below=of noetherian] {$S$-Noetherian};
	\node (snilnoetherian)   [below=of nilnoetherian] {Nil-$S$-Noetherian};
	
	\draw[arrow] (noetherian) -- (nilnoetherian);        
	\draw[arrow] (noetherian) -- (snoetherian);
	\draw[arrow] (nilnoetherian) -- (snilnoetherian);
	\draw[arrow] (snoetherian) -- (snilnoetherian);

	\end{tikzpicture}
\end{center}
Among other results, we prove Cohen's theorem, Levitzki's theorem, and Hilbert basis theorem for Nil-$S$-Noetherian rings (see Theorems \ref{cohen}, \ref{Levitzki-1}, \ref{Levitzki-2}, \ref{Hilbert-1}, and \ref{Hilbert-2}). 
The classical Lasker--Noether theorem states that in a commutative Noetherian ring every ideal can be expressed as the intersection of finitely many primary ideals, called a primary decomposition \cite{le05, ne21}. 
Recently, Singh et al. ~\cite{ts24} extended this classical theorem by establishing the existence of $S$-primary decomposition in $S$-Noetherian modules, thereby demonstrating that the Lasker-Noether theorem remains valid in this generalized setting. Motivated by these developments, in this paper we further generalize the theory by proving the Lasker-Noether theorem for Nil-$S$-Noetherian rings (see Theorem \ref{on}).

  Throughout this paper, all rings are assumed to be commutative with identity. We denote by $\mathrm{Nil}(R)$ the nilradical of the ring  $R$, that is, the set of all nilpotent elements of $R$, and by $Z(R)$ the set of all zero divisors of $R$, unless otherwise stated.
  
\section{Nil-$S$-Noetherian Rings}
We begin the section by introducing the notion of Nil-$S$-Noetherian rings, which unifies and extends both \( \text{Nil}_* \)-Noetherian rings and $S$-Noetherian rings. 

\begin{deff}\label{n1}
Let $R$ be a  ring and $S \subseteq R $ be a multiplicatively closed subset.	We say that $R$ is  Nil-$S$-Noetherian if each nil ideal of $R$ is $S$-finite.
\end{deff}

Clearly every Nil$_*$-Noetherian ring and every $S$-Noetherian ring is a Nil-$S$-Noetherian ring; however, the converse does not hold in general. The following example shows that a Nil-$S$-Noetherian ring need not be Nil$_{*}$-Noetherian.
\begin{eg}\label{n12}
	\noindent
	Let $R = \prod\limits_{n \geq 1} \mathbb{Z}_{p^n}$, where $p$ is a prime, and let $S=\{ 1_{R}, s \}$, where $s= (\bar{1}, \bar{1}, 0, 0,  \ldots)$.
	For each $i\geq 2$, let $a_i \in R$ be the element of $R$ whose $i$-th component is $\overline{p^{\,i-1}}$ and all other components are zero, that is,  $a_i=(0,\ldots,0,\overline{p^{\,i-1}},0,\ldots)$, where $\overline{p^{\,i-1}}$ appears in the $i$-th component.
	 Then  $I=\langle a_2,a_3, a_4,  \ldots \rangle$ is an infinitely generated ideal with $I^2 = (0)$.   Thus $I$ is an infinitely generated nil ideal of $R$, and hence $R$ is not  Nil$_{*}$-Noetherian.

	Let $J$ be an ideal of $R$. If $J$ is of the form $\{(0,0, \alpha_{1}, \alpha_{2},\alpha_{3},\ldots)\mid \alpha_{i}\in\mathbb{Z}_{p^{i+2}}~ for~i=1,2,3,\ldots\}$. Then for $s=(\bar{1},\bar{1},0,0,\ldots)\in S$, we have  $sJ=0$. Thus $sJ \subseteq F \subseteq J$, where $F=(0)$. 
	Further, if $J$ is of the forms  $(\alpha,0,0,\ldots)$ or $(0,\beta,0,\ldots)$, then again $sJ\subseteq F\subseteq J$ for  finitely generated ideal  $ F=\mathbb{Z}_p \times \mathbb{Z}_{p^2} \times 0 \times 0 \times \cdots$ of $R$.
	In all other cases,  we obtain 
	$sJ \subseteq F \subseteq J$
	where  $F=\mathbb{Z}_p \times \mathbb{Z}_{p^2} \times 0 \times 0 \times \cdots$
	is finitely generated. Hence every ideal of $R$ is $S$-finite. Thus $R$ is an $S$-Noetherian ring, and consequently $R$ is a Nil-$S$-Noetherian ring.
	\end{eg}

The following example shows that a Nil-$S$-Noetherian ring need not be $S$-Noetherian.

\begin{eg}\label{n13}
	Let $R = k[x_1, x_2, x_3,\ldots]$, and $S=\{x_1^n \mid n\in \mathbb{Z}_{\ge 0}\}$. Consider the ideal
	$I=\langle x_2,x_3,x_4,\ldots \rangle$. We claim that there exists no $s\in S$ and no finitely generated ideal $F$ of $R$ such that $sI \subseteq F \subseteq I$.
	Suppose, to the contrary, that such an element $s\in S$ and a finitely generated ideal $F$ exist. Then $s=x_1^m$ for some $m\ge 0$. Since $F\subseteq I$ and $F$ is finitely generated, there exist indices $2\le i_1,i_2,\ldots,i_t$ such that $F=\langle  x_{i_1},x_{i_2},\ldots,x_{i_t} \rangle$. Now, $sI = x_1^m  \langle x_2,x_3,x_4,\ldots \rangle= \langle x_1^m x_2,\, x_1^m x_3,\, x_1^m x_4,\, \ldots \rangle$. Consequently, $sI\nsubseteq F$ since $x_{1}\notin F$. This contradicts the assumption that $sI\subseteq F$. Therefore, no such $s\in S$ and finitely generated ideal $F$ exist. Hence, $I$ is not $S$-finite, and so $R$ is not an $S$--Noetherian ring. Moreover, since $R$ is an integral domain, we have $\operatorname{Nil}(R)=0$. Consequently, every nil ideal of $R$ is trivially $S$-finite, and hence $R$ is a Nil-$S$--Noetherian ring.
\end{eg}

\begin{prop}
	A ring $R$ is $S$-Noetherian if and only if it is both Nil-$S$-Noetherian and Nonnil-$S$-Noetherian.
\end{prop}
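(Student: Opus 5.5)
The forward direction follows directly from the definitions. If $R$ is $S$-Noetherian, then every ideal of $R$ is $S$-finite. In particular every nil ideal is $S$-finite, so $R$ is Nil-$S$-Noetherian, and every nonnil ideal is $S$-finite, so $R$ is Nonnil-$S$-Noetherian in the sense of Kwon and Lim \cite{nonnil20}.

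For the converse, let $I$ be an arbitrary ideal of $R$. We split into two cases according to whether $I \subseteq \mathrm{Nil}(R)$.

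\emph{Case 1: $I \subseteq \mathrm{Nil}(R)$.} Then $I$ is a nil ideal, so it is $S$-finite because $R$ is Nil-$S$-Noetherian.

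\emph{Case 2: $I \nsubseteq \mathrm{Nil}(R)$.} Then $I$ is a nonnil ideal, so it is $S$-finite because $R$ is Nonnil-$S$-Noetherian.

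The two cases are exhaustive, since every ideal is either contained in the nilradical or not. Hence every ideal of $R$ is $S$-finite, and $R$ is $S$-Noetherian.

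There is no real obstacle. The one point to check is that both definitions use the same notion of $S$-finiteness: there exist $s\in S$ and a finitely generated $J$ with $sI\subseteq J\subseteq I$. Both do, so the dichotomy alone gives the result, and nothing like a localization or Cohen-type argument is needed.
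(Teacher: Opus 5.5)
Your proof is correct and matches the paper's argument. The forward direction is immediate from the definitions. For the converse, you split an arbitrary ideal according to whether it lies in $\mathrm{Nil}(R)$, then apply the Nil-$S$-Noetherian hypothesis in one case and the Nonnil-$S$-Noetherian hypothesis in the other.
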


\begin{proof}
	Suppose that $R$ is an $S$-Noetherian ring. Then every ideal of $R$ is $S$-finite. In particular, every nil ideal and every nonnil ideal of $R$ is $S$-finite. Hence $R$ is both Nil-$S$-Noetherian and Nonnil-$S$-Noetherian.
	Conversely, assume that $R$ is both Nil-$S$-Noetherian and Nonnil-$S$-Noetherian. Let $I$ be an ideal of $R$. If $I \subseteq \mathrm{Nil}(R)$, then $I$ is $S$-finite. If $I \nsubseteq \mathrm{Nil}(R)$, then $I$ is $S$-finite. Thus, in either case, $I$ is $S$-finite. Therefore, every ideal of $R$ is $S$-finite, and hence $R$ is $S$-Noetherian.
\end{proof}

Recall \cite{zb17}, let $E$ be a family of ideals of a ring $R$. An element $I\in E$ is said to be an \textit{$S$-maximal element} of $E$ if there exists an $s\in S$ such that for each $J\in E$, if $I\subseteq J$, then $sJ\subseteq I$. Also, a chain of ideals $(I_{i})_{i\in\wedge}$  of $R$ is called \textit{$S$-stationary} if there exist $k\in \wedge$ and $s\in S$ such that  $sI_{i}\subseteq I_{k}$ for all $i\in\wedge$, where $\wedge$ is an arbitrary indexing set.  A family $\mathcal{F}$ of ideals of $R$ is said to be $S$-saturated if it satisfies the following property: for every ideal $I$ of $R$, if there exist $s\in S$ and $J\in\mathcal{F}$ such that $sI\subseteq J$, then $I\in\mathcal{F}$.

\begin{theorem}\label{j1}
	Let $R$ be  a  ring. Then the following statements are equivalent.
	\begin{enumerate}
		\item $R$ is  Nil-$S$-Noetherian.
		
		\item  Every ascending chain of nil ideals of $R$ is $S$-stationary.
		
		\item Every nonempty $S$-saturated set of  nil ideals of $R$ has a maximal element.
		
		\item Every nonempty family of nil ideals has an $S$-maximal element with respect to inclusion.
	\end{enumerate}
\end{theorem}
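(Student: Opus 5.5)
The plan is to adapt the standard proofs for $S$-Noetherian rings (Anderson--Dumitrescu; Hamed--Bouvier-type characterizations in \cite{zb17}) to the family of nil ideals. Two facts keep the nil setting closed: any union of a chain of nil ideals is nil, and any subideal of a nil ideal is nil. I will prove $(1)\Rightarrow(2)\Rightarrow(4)\Rightarrow(1)$ and, separately, $(1)\Leftrightarrow(3)$ (or $(4)\Rightarrow(3)\Rightarrow(1)$).

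For $(1)\Rightarrow(2)$, let $(I_i)_{i\in\wedge}$ be an ascending chain of nil ideals and put $I=\bigcup_i I_i$, which is a nil ideal. By (1) there are $s\in S$ and a finitely generated $F=\langle a_1,\dots,a_n\rangle\subseteq I$ with $sI\subseteq F$. Each $a_j$ lies in some $I_{i_j}$, so $F\subseteq I_k$ for $k$ the largest of these indices. Then $sI_i\subseteq sI\subseteq I_k$ for all $i$.

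For $(2)\Rightarrow(4)$, argue by contradiction. Suppose a nonempty family $E$ of nil ideals has no $S$-maximal element. Then for every $I\in E$ and every $s\in S$ there is $J\in E$ with $I\subseteq J$ and $sJ\not\subseteq I$. Using the axiom of choice, build a strictly increasing chain $I_1\subseteq I_2\subseteq\cdots$ in $E$ as follows: at step $n$, choose $I_{n+1}\supseteq I_n$ with $s_nI_{n+1}\not\subseteq I_n$. This is the main obstacle, because $S$ may be uncountable and a countable chain cannot defeat every $s$ at once. To handle it, I would use the device from \cite{zb17}: choose $I_{n+1}$ with $sI_{n+1}\not\subseteq I_n$ for a carefully chosen $s$. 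Concretely, (2) gives $k$ and $s$ with $sI_i\subseteq I_k$ for all $i$, so in particular $sI_{k+1}\subseteq I_k$. I would then try to arrange the construction so that this forces a contradiction, for instance by taking transfinite chains indexed by ordinals, which (2) permits since $\wedge$ is arbitrary. The failure of $S$-maximality lets the chain be extended at every stage, and unions handle limit stages. Once the chain is longer than $|S|$, pigeonhole gives some $s$ and index $k$ with $sI_{k+1}\subseteq I_k$, contradicting the choice at step $k$. The argument has to be set up so that the element defeated at step $k$ is exactly the $s$ supplied by (2); I expect this bookkeeping to be the delicate part.

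For $(4)\Rightarrow(1)$, let $I$ be a nil ideal and let $E$ be the family of finitely generated ideals contained in $I$; each of these is nil. Take an $S$-maximal $F\in E$ with element $s$. For any $a\in I$, the ideal $F+\langle a\rangle$ lies in $E$ and contains $F$, so $sa\in F$. Hence $sI\subseteq F\subseteq I$.

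For $(4)\Rightarrow(3)$, let $\mathcal F$ be a nonempty $S$-saturated family of nil ideals and take an $S$-maximal $I$ with element $s$. If $I\subseteq J\in\mathcal F$, then $sJ\subseteq I$. I would then want to show $I$ is in fact maximal, but saturation only pushes ideals into $\mathcal F$ and does not shrink $J$. So instead I would prove $(3)\Rightarrow(1)$ directly. Suppose a nil ideal $I$ is not $S$-finite. Let $\mathcal F$ be the set of nil ideals $J\subseteq I$ that are not $S$-finite relative to $I$; I need to adjust this definition so that $\mathcal F$ is $S$-saturated. A maximal element of $\mathcal F$ should then contradict maximality by adjoining an element. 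For $(1)\Rightarrow(3)$, given $\mathcal F$, apply (2) to a Zorn-type chain: the union of a chain in $\mathcal F$ satisfies $sU\subseteq I_k\in\mathcal F$, so $U\in\mathcal F$ by saturation. Zorn's lemma then gives a maximal element.
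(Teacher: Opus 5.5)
You correctly prove three implications, and they coincide with the paper's: (1)$\Rightarrow$(2), (2)$\Rightarrow$(3) (the Zorn argument you give under ``(1)$\Rightarrow$(3)''), and (4)$\Rightarrow$(1). The gap is that nothing in your proposal gets out of (3) or into (4). Your direct (2)$\Rightarrow$(4) is unfinished, and your (3)$\Rightarrow$(1) is only a plan (``I need to adjust this definition'').

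The transfinite construction does not work as described, for two reasons.
\begin{enumerate}
\item ``No $S$-maximal element'' only lets you extend from members of $E$. At a limit stage the union of your chain need not lie in $E$, so the recursion stalls at the first limit ordinal.
\item Condition (2) hands you an $s$ and a $k$ that you do not control, and pigeonhole does not make $s$ the element defeated at step $k$. What you would need is that, for every $s\in S$ and every $k$, some step $\alpha\ge k$ defeats a multiple of $s$. Then $sI_{\alpha+1}\subseteq I_k\subseteq I_\alpha$ gives the contradiction. In an $\omega$-chain this can be arranged when $S=\{t_1,t_2,\dots\}$ is countable, by defeating $t_1t_2\cdots t_n$ at step $n$. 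It cannot be arranged in general, which is exactly the obstacle you flagged.
\end{enumerate}

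The missing idea is to $S$-saturate the family before applying (3). This proves (3)$\Rightarrow$(4) and closes your cycle as (1)$\Rightarrow$(2)$\Rightarrow$(3)$\Rightarrow$(4)$\Rightarrow$(1), with no chain construction.
\begin{enumerate}
\item Given a nonempty family $\mathcal{D}$ of nil ideals, let $\mathcal{D}^S$ be the set of nil ideals $L$ with $sL\subseteq L_0$ for some $s\in S$ and $L_0\in\mathcal{D}$. It contains $\mathcal{D}$ and is $S$-saturated.
\item The saturation condition must be read only for nil ideals $I$, as the paper implicitly does. Under the literal reading ``for every ideal $I$'', $\mathcal{D}^S$ is not saturated. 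Worse, a nonempty saturated set of nil ideals would force every $\mathrm{Ann}(s)$, $s\in S$, to be nil, since $s\,\mathrm{Ann}(s)=0$ lies in any member.
\item By (3), take a maximal $K\in\mathcal{D}^S$, and fix $s\in S$ and $Q\in\mathcal{D}$ with $sK\subseteq Q$.
\item If $L\in\mathcal{D}$ with $Q\subseteq L$, then $K+L$ is nil and $s(K+L)\subseteq Q+L=L$. Hence $K+L\in\mathcal{D}^S$, and maximality gives $L\subseteq K$.
\item Therefore $sL\subseteq sK\subseteq Q$, so $Q$ is $S$-maximal in $\mathcal{D}$ with a single uniform $s$.
\end{enumerate}
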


\begin{proof}
	\leavevmode
	
	\item(1) $\Rightarrow$ (2). Let $(I_n)_{n\in\wedge}$ be an increasing sequence of nil ideals of $R$. Define the ideal $I = \bigcup\limits_{n\in\wedge}I_n$. Let $\alpha\in I$. Then $\alpha\in I_{k}$ for some $k\in\wedge$. Since $I_{k}$ is nil, there exists $m\in\mathbb{N}$ such that $\alpha^{m}=0$, and hence $I$ is nil. This implies that $I$ is $S$-finite since $R$ is Nil-$S$-Noetherian. Consequently, there exist a finitely  generated ideal $F\subseteq R$ and $s\in S$ such that $sI\subseteq F\subseteq I$. Since $F$ is finitely generated, there is a $k\in\wedge$ satisfying $F\subseteq I_{k}$. Then we have $sI\subseteq F\subseteq I_{k}$, from which it follows that $sI_n \subseteq I_{k}$ for each $n\in\wedge$.\\
	
	\item (2) $\Rightarrow$ (3).  Let $\mathcal{D}$ be an $S$-saturated set of nil ideals of $R$. Given any chain \(\{I_n\}_{n \in\wedge} \subseteq \mathcal{D} \), we claim that \( I = \bigcup\limits_{n\in\wedge} I_n \) belongs to $\mathcal{D}$, which will establish that \( I \) as an upper bound for the chain. Since each $I_n$ is a nil ideal of $R$, so $I$ is a nil ideal of $R$. Also by (2), there exist \( k \in \wedge \) and \( s \in S \) such that \( s I_n \subseteq I_k \) for every \( n \in \wedge \). Consequently, we obtain $sI = s \left(\bigcup\limits_{n\in\wedge} I_n \right) \subseteq I_k$. Since $\mathcal{D}$ is $S$-saturated, it follows that \( I \in \mathcal{D} \), as required. Applying Zorn's lemma, we conclude that $\mathcal{D}$ has a maximal element.\\
	
	\item (3) $\Rightarrow$ (4). Let $\mathcal{D}$ be a nonempty set of nil ideals of $R$. Consider the family $\mathcal{D}^{S}$ of all nil ideals $L \subseteq R$ such that there exist some $s\in S$ and $L_0 \in \mathcal{D}$ with $sL \subseteq L_0$. Clearly, $\mathcal{D}\subseteq \mathcal{D}^{S}$, so $\mathcal{D}^{S}\neq\emptyset$. It is straightforward to see that $\mathcal{D}^{S}$ is $S$-saturated. Thus, by (3) $\mathcal{D}^{S}$ has a maximal element $K\in\mathcal{D}^{S}$. Fix $s \in S$ and $Q\in \mathcal{D}$ such that $sK\subseteq Q$. Now, we claim that $Q$ is an $S$-maximal element of $\mathcal{D}$; specifically, given $L\in \mathcal{D}$ with $Q\subseteq L$, we will show that $sL \subseteq Q$. Note that $K + L$ satisfies $s(K + L)= sK + sL \subseteq Q + L \subseteq L$, so that $K + L \in \mathcal{D}^{S}$. Also,  $K+L$ is a nil ideal of $R$. Therefore maximality of $K$ implies $K = K + L$, so that $L \subseteq K$. But then $sL \subseteq sK \subseteq Q$, as desired.\\
	
	\item (4) $\Rightarrow$ (1). Let $I$ be a nil ideal of $R$. Let $\mathcal{D}$ denote  the family of finitely generated nil ideal $J$ of $R$ such that $J\subseteq I$, which is nonempty as $0\in\mathcal{D}$. Then $\mathcal{D}$ has an $S$-maximal element $K\in\mathcal{D}$. Fixing $x\in I$, take a finitely generated ideal of the form $Q= K + xR$. Since $K\subseteq I$ and $x\in I$, so $Q\subseteq I$. Consequently, $Q\in\mathcal{D}$ such that $K\subseteq Q$. This implies that there exists $s\in S$ such that $sQ\subseteq K$; in particular, $sx\in K$. This verifies $sI\subseteq K\subseteq I$, so that $I$ is $S$-finite. It follows that $R$ is Nil-$S$-Noetherian.
	
\end{proof}

Following from $\cite{ap20}$ a ring $R$ is said to be  \textit{$S$-reduced} if for any $r\in R$ such that $r^{n}=0$ for some $n \in\mathbb{N}$, then there exists $s\in S$ such that $sr= 0$. We introduce the notion of the uniform $S$-version of a reduced ring as follows:
 \begin{deff}\label{p1}
A ring $R$ is said to be \textit{uniformly-$S$-reduced} if there exists an element $s \in S$ such that for every $a \in R$, whenever $a^n = 0$ for some $n \in \mathbb{N}$, we have $sa = 0$.
\end{deff}
\begin{eg}\label{p2}
	Let $R=\mathbb{Z}_{24}$, and $S=\{\bar{1}, \bar{2}, \bar{4}, \bar{8}, \overline{16}\}$. Evidently, $Nil^*(R) = \{\bar{6}, \overline{12}, \overline{18} \}$ is the set of all non-zero nilpotent elements of $R$. Let $x\in Nil(R)$. Then we have $x^{3}=\bar{0}$. Take $s=\bar{4}\in S$. Then $sx=\bar{0}$, and hence $R$ is an uniformly-$S$-reduced ring with respect to $\bar{4}\in S$.
\end{eg}

In Definition~\ref{p1}, the element $s$ does not depend on the choice of $a \in R$; it depends only on the ring $R$ itself. It is clear from the definition, every uniformly-$S$-reduced ring is $S$-reduced ring. The following example shows that an $S$-reduced ring  need not be an uniformly-$S$-reduced ring.

	\begin{eg}
	\noindent
	Let $R=\dfrac{\mathbb{Q}[x_1,x_2,\ldots ,x_n,\ldots]}
	{\langle x_1,x_2^{2},x_3^{3},\ldots \rangle}$. Then $P= \operatorname{Nil}(R)=\dfrac{\langle x_1,x_2,\ldots ,x_n,\ldots \rangle}
	{\langle x_1,x_2^{2},x_3^{3},\ldots \rangle}$  is the unique prime ideal of $R$. Consider a multiplicative closed set $S=\{\bar{x_{i_1}}^{ n_{i_1}}\bar{x_{i_2}}^{n_{i_2}}\cdots \bar{x_{i_k}}^{n_{i_k}}\mid i_1,\ldots,i_k\in\mathbb{N}~~
	and~~n_{i_1},\ldots,n_{i_k}\in\mathbb{N}\cup\{0\}\}$. Since $\overline{x_i}^{\,i}=0$ in $R$ for all $i\in\mathbb{N}$, take $s_i=\overline{x_i}^{\,i-1}\in S$. Then $s_i\overline{x_i}=0$. Hence, $R$ is an $S$-reduced ring, but it is not a uniformaly
	$S$-reduced ring, because we cannot find a single $s\in S$
	such that $s\overline{x_i}=0$.
\end{eg}

\begin{prop}\label{n4}
Every uniformly-$S$-reduced ring is a Nil-$S$-Noetherian ring.
\end{prop}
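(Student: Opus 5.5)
The plan is to verify Definition~\ref{n1} directly: every nil ideal of a uniformly-$S$-reduced ring must be shown to be $S$-finite, and the obvious witness is the zero ideal. Fix the element $s\in S$ supplied by Definition~\ref{p1}. By that definition, $sa=0$ for every nilpotent $a\in R$; in other words, $s\,\mathrm{Nil}(R)=0$.

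Let $I$ be an arbitrary nil ideal of $R$, so $I\subseteq \mathrm{Nil}(R)$. Then every element $a\in I$ is nilpotent, so $sa=0$, and hence $sI=(0)$. Take $F=(0)$, which is finitely generated (by the empty set, or by $0$). This gives
\[
sI=(0)\subseteq F\subseteq I ,
\]
so $I$ is $S$-finite. Since $I$ was an arbitrary nil ideal, $R$ is Nil-$S$-Noetherian.

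The argument is essentially immediate. The one point worth stressing is that the same $s$ works for all nil ideals simultaneously, although $S$-finiteness only requires some $s$ for each ideal. So the uniformity in Definition~\ref{p1} is stronger than what this proof needs.

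The real issue is whether mere $S$-reducedness would suffice. In that case the annihilating element depends on the individual nilpotent, and for an infinitely generated nil ideal no single $s$ need work. This is why the proposition is stated for uniformly-$S$-reduced rings. I do not expect any step of the proof itself to be an obstacle.

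Alternatively, one could invoke Theorem~\ref{j1}(2): for any ascending chain $(I_n)$ of nil ideals we have $sI_n=0\subseteq I_k$ for any fixed $k$, so every such chain is $S$-stationary. The direct argument above is simpler, though.
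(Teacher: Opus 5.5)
Your proof is correct and is the paper's argument: the single $s\in S$ from Definition~\ref{p1} kills every nilpotent element, so $sI=(0)\subseteq(0)\subseteq I$ for each nil ideal $I$, and hence every such $I$ is $S$-finite. Your remark that the definition of $S$-finiteness only needs an annihilating element chosen separately for each nil ideal is also accurate.
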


\begin{proof}
 Let $R$ be a uniformly-$S$-reduced ring, and $I$ be a nil ideal of $R$. Let $\alpha\in I$. Then there exists $n\in\mathbb{N}$ such that $\alpha^{n}=0$. Since $R$ is uniformly-$S$-reduced ring, it follows that there exists fixed $t\in S$ such that $t\alpha=0$. This implies that $tI=(0)$ i.e., $I$ is $S$-finite. Therefore $R$ is a Nil-$S$-Noetherian ring. 
\end{proof}

\begin{prop}\label{n2}
Let $R$ be a Nil-$S$-Noetherian ring and $I$ be  a nil ideal of $R$. Then $R/I$ is a $\overline{S}$-Noetherian ring, where $\overline{S} = \{s+I: s\in S\}$.
\end{prop}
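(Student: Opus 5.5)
The plan is to use the correspondence between ideals of $R/I$ and ideals of $R$ containing $I$, and to transfer $S$-finiteness through the canonical surjection $\pi\colon R\to R/I$. Fix an ideal of $R/I$. It has the form $J/I$ for a unique ideal $J$ of $R$ with $I\subseteq J$, and $\overline{S}=\pi(S)$ is multiplicatively closed.

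The key lemma, which is routine, is that $S$-finiteness passes to images. Suppose $sJ\subseteq F\subseteq J$ with $F=\langle f_1,\ldots,f_n\rangle$. Apply $\pi$ to get
\[
(s+I)(J/I)\subseteq (F+I)/I\subseteq J/I,
\]
where $(F+I)/I$ is generated by $f_1+I,\ldots,f_n+I$. Hence $J/I$ is $\overline{S}$-finite. This reduces the statement to showing that every ideal $J\supseteq I$ of $R$ is $S$-finite, or at least $S$-finite modulo $I$.

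I would split into two cases.

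\textbf{Case $J\subseteq \mathrm{Nil}(R)$.} Then $J$ is a nil ideal, so it is $S$-finite by Definition~\ref{n1}, and the lemma finishes. Because $I$ is nil, an element $a+I$ is nilpotent in $R/I$ exactly when $a$ is nilpotent in $R$. So this case covers precisely the nil ideals of $R/I$, that is, $\mathrm{Nil}(R/I)=\mathrm{Nil}(R)/I$. I would record this identity as an intermediate step.

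\textbf{Case $J\nsubseteq\mathrm{Nil}(R)$.} This is where I expect the main obstacle. The Nil-$S$-Noetherian hypothesis says nothing about non-nil ideals. My first attempt would be to write $J\cap\mathrm{Nil}(R)$, which is $S$-finite, and try to control $J/(J\cap \mathrm{Nil}(R))$.

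However, I expect this case to fail as worded. Take $I=0$, which is a nil ideal, in Example~\ref{n13}. There $R/I=R$ is Nil-$S$-Noetherian but not $S$-Noetherian. So no argument can close this case without an extra hypothesis. Two candidate hypotheses would suffice:
\begin{itemize}
\item $R$ is also Nonnil-$S$-Noetherian, after which the earlier Proposition gives the full conclusion;
\item the ideal $I$ is chosen so that $R/I$ is $\overline{S}$-Noetherian for other reasons.
\end{itemize}

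The concrete deliverable of the plan is therefore twofold. First, the two-step argument above proves that every nil ideal of $R/I$ is $\overline{S}$-finite. Second, it proves the full claim, that every ideal of $R/I$ is $\overline{S}$-finite, as soon as non-nil ideals are handled. The non-nil case is flagged as the step the stated hypotheses cannot support.
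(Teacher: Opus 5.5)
Your proposal is correct, and your Case 1 is exactly the paper's argument. The paper takes a nil ideal $K=J/I$ of $R/I$, observes that $J$ is nil because $I$ is, and pushes an $S$-finite presentation $sJ\subseteq\langle x_1,\ldots,x_n\rangle\subseteq J$ down to $R/I$. So what it actually proves is that $R/I$ is Nil-$\overline{S}$-Noetherian, and its closing sentence ``Hence $R/I$ is an $\overline{S}$-Noetherian ring'' overclaims.

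Your counterexample shows that the printed statement is false. Take $I=0$ in Example~\ref{n13}. Then $\overline{S}=S$, and $R$ is Nil-$S$-Noetherian since $\mathrm{Nil}(R)=0$. But $R$ is not $S$-Noetherian; for instance, $S^{-1}R=k[x_1^{\pm 1},x_2,x_3,\ldots]$ is not Noetherian. The conclusion should therefore read ``Nil-$\overline{S}$-Noetherian.''
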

\begin{proof}
Let $K$ be a nil ideal of $R/I$. Then $K = J/I$ for some ideal $J$ of $R$ containing $I$. If $K\cap \overline{S}\neq\emptyset$, then $K$ is $\overline{S}$-finite, by \cite[Proposition 2(a)]{ad02}. Now, let $K\cap \overline{S}=\emptyset$ and $a+I\in K$, where $a\in J\setminus I$. Then there exists $n\in\mathbb{N}$ such that $(a+I)^{n}=I$ since $K$ is nil. It follows that $a^{n}\in I$. Since $I$ is nil, there exist $k\in\mathbb{N}$ such that $a^{nk}=0$. Put $m=nk\in\mathbb{N}$. Thus $a^{m}=0$, and hence $J$ is a nil ideal. Since $R$ is Nil-$S$-Noetherian, there exist $s\in S$ and $x_{1},\ldots, x_{n}\in J$ such that $sJ\subseteq \langle x_1,\ldots, x_{n} \rangle\subseteq J$. Let $x\in J$. Then we can find $a_1,\ldots, a_n\in R$ such that $sx=a_1x_1+\cdots+a_nx_n$. It follows that $(s+I)(x+I)=(a_1+I)(x_1+I)+\cdots+(a_n+I)(x_n+I)$, where $s+I\in\overline{S}$ and $a_{1}+I,\ldots, a_{n}+I\in R/I$. This implies that $(s+I)(J/I)\subseteq \langle x_1+I,\ldots, x_n+I \rangle\subseteq J/I$, i.e., $K$ is $\overline{S}$-finite. Hence $R/I$ is an $\overline{S}$-Noetherian ring. 
\end{proof}

\begin{prop}
Let $R = \prod\limits_{i=1}^{n} R_{i}$ be a finite direct product of rings and 
let $S \subseteq R$ be a multiplicatively closed set. For each $i = 1, \dots, n$, 
let $S_{i} = \pi_{i}(S)$, which is a multiplicatively closed subset of $R_{i}$, 
where $\pi_{i}: R \to R_{i}$ denotes the natural projection. Then 
$R$ is a Nil-$S$-Noetherian ring if and only if each $R_{i}$ is a Nil-$S_{i}$-Noetherian ring.

\end{prop}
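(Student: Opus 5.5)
Throughout we write $R=\prod_{i=1}^n R_i$, $\pi_i$ for the projections and $e_i$ for the idempotent with $1$ in the $i$-th slot and $0$ elsewhere. The plan rests on two standard facts about finite products. First, every ideal $I$ of $R$ decomposes as $I=I_1\times\cdots\times I_n$ with $I_i=\pi_i(I)$, because $e_iI\subseteq I$. Second, $\mathrm{Nil}(R)=\prod_i \mathrm{Nil}(R_i)$, since an element is nilpotent exactly when each coordinate is nilpotent. Together these show that $I$ is a nil ideal of $R$ if and only if each $I_i$ is a nil ideal of $R_i$. Each implication then amounts to transporting $S$-finiteness along the projections or along the decomposition.

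($\Rightarrow$) Fix $i$ and a nil ideal $J$ of $R_i$. Let $I$ be the ideal of $R$ with $J$ in the $i$-th slot and $0$ elsewhere; it is nil. By hypothesis there are $s\in S$ and a finitely generated ideal $F=\langle x_1,\dots,x_m\rangle$ with $sI\subseteq F\subseteq I$. Applying $\pi_i$ gives
\[
\pi_i(s)J\subseteq \langle \pi_i(x_1),\dots,\pi_i(x_m)\rangle\subseteq J,
\]
with $\pi_i(s)\in S_i$. Hence $J$ is $S_i$-finite.

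($\Leftarrow$) Let $I=I_1\times\cdots\times I_n$ be a nil ideal of $R$. Each $I_i$ is nil, so there are $t_i\in S_i$ and finitely generated ideals $F_i$ with $t_iI_i\subseteq F_i\subseteq I_i$. Write $t_i=\pi_i(s_i)$ with $s_i\in S$, and put $s=s_1s_2\cdots s_n\in S$. Then $\pi_i(s)=t_i\prod_{j\ne i}\pi_i(s_j)$. It follows that
\[
\pi_i(s)I_i\subseteq t_iI_i\subseteq F_i,
\]
because $I_i$ is an ideal and so multiplication by the extra factors keeps us inside $I_i$. Set $F=F_1\times\cdots\times F_n$. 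It is finitely generated: if $F_i$ is generated by $y_{i1},\dots,y_{ik_i}$, then $F$ is generated by the elements $e_i$ times (any lift of) $y_{ij}$, that is, the element with $y_{ij}$ in slot $i$ and $0$ elsewhere. Moreover $F\subseteq I$. Since $sI=\prod_i \pi_i(s)I_i$, we get $sI\subseteq F\subseteq I$, so $I$ is $S$-finite.

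The main obstacle is that $S_i$ is only the image of $S$, so a witness $t_i$ for the $i$-th factor need not come from an element of $S$ that is simultaneously good in all coordinates. Taking the product $s=s_1\cdots s_n$ of chosen preimages resolves this, using only that $S$ is multiplicatively closed and each $I_i$ is an ideal.
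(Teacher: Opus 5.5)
Your proof is correct and takes the same route as the paper. For ($\Rightarrow$) you place a nil ideal of $R_i$ in the $i$-th slot and push its $S$-finiteness witness forward along $\pi_i$; for ($\Leftarrow$) you decompose a nil ideal as $I_1\times\cdots\times I_n$ and assemble the witnesses componentwise.

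You are in fact more careful than the paper in the converse. The paper simply asserts that $I$ is $S$-finite once each $I_i$ is. You handle the point that the witnesses $t_i\in S_i$ need not come from one element of $S$ that works in every coordinate, by taking $s=s_1\cdots s_n$ with $\pi_i(s_i)=t_i$.
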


\begin{proof}
Suppose $R$ is Nil-$S$-Noetherian and let $I_{i}$ be a nil ideal of $R_{i}$. Then the corresponding nil ideal of $R$ is $E_{i}=\{0\}\times \cdots \times I_{i}\times \cdots \times \{0\}$. The projection of $E_{i}$ onto the $i$-th coordinate is precisely $I_{i}$. 
Since $R$ is Nil-$S$-Noetherian, $E_{i}$ must be $S$-finite. Hence, there exist $s\in S$ and a finitely generated ideal $J\subseteq R$ such that $sE_{i}\subseteq J \subseteq E_{i}$. Applying the projection map $\pi_{i}:R\to R_{i}$, we obtain $\pi_{i}(s)I_{i}=\pi_{i}(sE_{i}) \subseteq \pi_{i}(J)\subseteq I_{i}$, where $\pi_{i}(s)\in S_{i}=\pi_{i}(S)$ and $\pi_{i}(J)$ is a finitely generated ideal of $R_{i}$. Consequently, $I_{i}$ is $S_{i}$-finite, and therefore 
$R_{i}$ is Nil-$S_{i}$-Noetherian.

Conversely, suppose each $R_{i}$ is Nil-$S_{i}$-Noetherian. Let $I$ be a nil ideal of $R$. Then $I = I_{1} \times \cdots \times I_{n}$, where each $I_{i}$ is a nil ideal of $R_{i}$. By assumption, each $I_{i}$ is $S$-finite, and hence $I$ is $S$-finite in $R$. Therefore, $R$ is Nil-$S$-Noetherian.
\end{proof}

\begin{prop}
Let $\varphi:R\to R'$ be a ring homomorphism making $R$ a module
retract of $R'$, i.e., there exists an $R$-module homomorphism $\rho:R'\to R$ with $\rho\circ\varphi=\operatorname{id}_R$. If $R'$ is Nil-$\varphi(S)$-Noetherian, where $\varphi(S)$ is a multiplicatively closed subset of $R'$ with $0\notin \varphi(S)$, then $R$ is Nil-$S$-Noetherian.
\end{prop}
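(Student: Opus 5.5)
Let $I$ be a nil ideal of $R$. We extend it to $R'$ by taking $IR'=\varphi(I)R'$, the ideal of $R'$ generated by $\varphi(I)$. We then transport an $S$-finiteness witness for $IR'$ back to $R$ via $\rho$.

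First, $IR'$ is a nil ideal of $R'$. Each element of $IR'$ is a finite sum $\sum r'_j\varphi(a_j)$ with $a_j\in I$. Each $\varphi(a_j)$ is nilpotent, because $\varphi$ is a ring homomorphism and $a_j$ is nilpotent. In a commutative ring the nilpotent elements form an ideal, so the sum is nilpotent. Hence $IR'\subseteq \operatorname{Nil}(R')$.

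Since $R'$ is Nil-$\varphi(S)$-Noetherian, there exist $s\in S$ and a finitely generated ideal $F\subseteq IR'$ with $\varphi(s)IR'\subseteq F$. We may take the generators of $F$ to be $y_1,\dots,y_m\in IR'$. Each $y_k$ is a finite combination $\sum_j r'_{kj}\varphi(a_{kj})$ with $a_{kj}\in I$. Replacing $F$ by $F_0=\langle \varphi(a_{kj})\rangle R'$, which still lies between $F$ and $IR'$, we may assume $F$ is generated by $\varphi(b_1),\dots,\varphi(b_n)$ with $b_i\in I$. Set $J=\langle b_1,\dots,b_n\rangle\subseteq I$.

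It remains to show $sI\subseteq J$. Let $x\in I$. Then $\varphi(s)\varphi(x)=\varphi(sx)\in F$, so
\[\varphi(sx)=\sum_{i=1}^n r'_i\varphi(b_i)\]
for some $r'_i\in R'$. Apply $\rho$. The $R$-module structure on $R'$ is via $\varphi$, so $r'\varphi(b)=b\cdot r'$, and $R$-linearity gives $\rho(r'\varphi(b))=b\rho(r')$. Hence
\[sx=\rho(\varphi(sx))=\sum_{i=1}^n \rho(r'_i)\,b_i\in J.\]
Thus $sI\subseteq J\subseteq I$, so $I$ is $S$-finite, and $R$ is Nil-$S$-Noetherian.

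The main obstacle is the bookkeeping in the last two steps. The generators of the finitely generated ideal must be chosen as images $\varphi(b_i)$ of elements of $I$, and $R$-linearity of $\rho$ must be used precisely in the form $\rho(r'\varphi(b))=b\rho(r')$. The hypothesis $0\notin\varphi(S)$ plays no role in this argument; it only ensures that $\varphi(S)$ is a genuine multiplicatively closed set as in the paper's conventions.
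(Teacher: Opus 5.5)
Your proof is correct and follows the same overall strategy as the paper: extend $I$ to the nil ideal $\varphi(I)R'$, use $\varphi(S)$-finiteness in $R'$, and pull back along $\rho$. The two arguments differ in the pullback step, and there your version is the sound one.

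The paper applies $\rho$ to an arbitrary finitely generated $J'$ with $s'\varphi(I)R'\subseteq J'\subseteq \varphi(I)R'$. It then simply asserts that $\rho(J')$ is a finitely generated ideal of $R$. This does not follow, because $\rho$ is only $R$-linear and $J'$ is finitely generated over $R'$, not over $R$. For instance, take $R'=R[x]$ and let $\rho$ be given by $\rho(1)=1$ and $\rho(x^n)=c_n$ with arbitrary $c_n\in R$; then $\rho(xR')=\langle c_1,c_2,\dots\rangle$. The paper also writes $\rho(s'\varphi(i)r')=\rho(s')\rho(\varphi(i))\rho(r')$, as though $\rho$ were multiplicative.

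You instead enlarge $F$ to the ideal generated by $\varphi(b_1),\dots,\varphi(b_n)$ with $b_i\in I$, and use the identity $\rho(r'\varphi(b))=b\,\rho(r')$. This makes the pulled-back ideal explicitly $\langle b_1,\dots,b_n\rangle$, which closes exactly the gap the paper leaves. Your observation that the hypothesis $0\notin\varphi(S)$ is never used is also correct.
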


\begin{proof}
Let $I$ be a nil ideal of $R$. For $i\in I$, since $\varphi$ is a ring homomorphism, we have $\varphi(i)^n=\varphi(i^n)=\varphi(0)=0$,
so $\varphi(i)$ is nilpotent in $R'$. Hence every element of $\varphi(I)$ is nilpotent. Now consider the ideal $\varphi(I)R'=\Big\{\;\sum_{j}\varphi(i_j)r'_j \;\Big|\; i_j\in I,\; r'_j\in R'\;\Big\}$.
This is the ideal generated by the nilpotent set $\varphi(I)$, and it is well known that an ideal generated by nilpotent elements is a nil ideal. Thus $\varphi(I)R'$ is a nil ideal of $R'$. Since $R'$ is Nil-$\varphi(S)$-Noetherian, there exist $s'\in \varphi(S)$ and a finitely generated ideal $J'\subseteq R'$ such that $s'\varphi(I)R'\subseteq J' \subseteq \varphi(I)R'$. Choose $s\in S$ with $\varphi(s)=s'$. Applying $\rho$ to the chain gives $\rho(s'\varphi(I)R') \subseteq \rho(J') \subseteq \rho(\varphi(I)R')$. We claim that $\rho(s'\varphi(I)R')=sI$.  
Indeed, if $x'\in s'\varphi(I)R'$, then $x'=s'\sum_j\varphi(i_j)r'_j$, so $\rho(x')=\sum_j \rho(s')\,\rho(\varphi(i_j))\,\rho(r'_j)
= \sum_j s i_j \rho(r'_j)\in sI$. Hence $\rho(s'\varphi(I)R')\subseteq sI$. Conversely, every element of $sI$ has the form $\sum_k (s i_k) r_k$ with $i_k\in I,\; r_k\in R$, and
$\sum_k (s i_k) r_k = \rho\!\left(\sum_k \varphi(s)\varphi(i_k)\varphi(r_k)\right)\in \rho(s'\varphi(I)R')$. Thus $sI\subseteq \rho(s'\varphi(I)R')$, proving the equality. Finally, note that $\rho(\varphi(I)R')\subseteq I$. Therefore, $sI=\rho(s'\varphi(I)R') \subseteq \rho(J') \subseteq \rho(\varphi(I)R')\subseteq I$. Since $J'$ is finitely generated ideal in $R'$, its image $\rho(J')$ is finitely generated in ideal in $R$. Thus there exists $s\in S$ with $sI\subseteq \rho(J')\subseteq I$, showing that $I$ is $S$-finite. Hence $R$ is Nil-$S$-Noetherian.

\end{proof}

\begin{theorem} \label{cohen}\textbf{(Cohen theorem for Nil-$S$-Noetherian rings).}
Let $R$ be a ring. Then the following statements are equivalent:
\begin{enumerate}
    \item $R$ is Nil-$S$-Noetherian. 
    \item Every prime ideal $P \subseteq Nil(R)$ with $P \cap S = \emptyset$ is $S$-finite.
\end{enumerate}
\end{theorem}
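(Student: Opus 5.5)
The implication (1) $\Rightarrow$ (2) is immediate. A prime ideal $P\subseteq Nil(R)$ is a nil ideal, so it is $S$-finite by Definition~\ref{n1}. For (2) $\Rightarrow$ (1) I would follow the Zorn's-lemma argument of Anderson and Dumitrescu \cite{ad02}, working only with nil ideals.

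\textbf{Step 1: a maximal non-$S$-finite nil ideal.} Suppose $R$ is not Nil-$S$-Noetherian, and let $\mathcal{F}$ be the nonempty set of nil ideals of $R$ that are not $S$-finite, ordered by inclusion. Let $(I_n)$ be a chain in $\mathcal{F}$ with union $I$. As in the proof of Theorem~\ref{j1}, $I$ is nil. If $I$ were $S$-finite, we would have $sI\subseteq F\subseteq I$ with $F$ finitely generated. Then $F\subseteq I_k$ for some $k$, so $sI_k\subseteq sI\subseteq F\subseteq I_k$, and $I_k$ would be $S$-finite, which is a contradiction. Hence $I\in\mathcal{F}$, and Zorn's lemma gives a maximal element $P\in\mathcal{F}$.

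\textbf{Step 2: $P\cap S=\emptyset$.} If $s\in P\cap S$, then $sP\subseteq sR\subseteq P$ with $sR$ finitely generated, so $P$ would be $S$-finite. So $P\cap S=\emptyset$.

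\textbf{Step 3: $P$ is prime, the expected main obstacle.} The plan is to suppose $a,b\notin P$ with $ab\in P$ and obtain a contradiction. In \cite{ad02} one uses maximality to make $P+aR$ $S$-finite, $s(P+aR)\subseteq \langle p_1+ar_1,\dots,p_n+ar_n\rangle$. One also makes $(P:a)\supseteq P+bR$ $S$-finite, $t(P:a)\subseteq \langle c_1,\dots,c_m\rangle$. Then
\[
stP\subseteq \langle p_1,\dots,p_n,ac_1,\dots,ac_m\rangle\subseteq P,
\]
which contradicts $P\notin\mathcal{F}$.

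The difficulty is that maximality in $\mathcal{F}$ only controls \emph{nil} ideals. So I would first try to restrict to $a,b\in Nil(R)$, where $P+aR$ and $(P:a)\cap Nil(R)\supseteq P+bR$ are nil. The computation above then goes through with $(P:a)$ replaced by $(P:a)\cap Nil(R)$. This shows only that $P$ is ``prime relative to $Nil(R)$''.

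To conclude that $P$ is genuinely prime and $P\subseteq Nil(R)$, one needs $P=Nil(R)$ and $Nil(R)$ prime. I expect this is exactly where the argument breaks down. Every prime contained in $Nil(R)$ equals $Nil(R)$, so (2) is vacuous when $Nil(R)$ is not prime. For example, with $A=k[x_1,x_2,\dots]/\langle x_1^2,x_2^2,\dots\rangle$, the ring $R=A\times A$ with $S=\{1\}$ has no prime ideal inside $Nil(R)$, so it satisfies (2). But its nil ideal $Nil(A)\times 0$ is not finitely generated, so $R$ is not Nil-$S$-Noetherian.

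So I would attempt the proof under the additional hypothesis that $Nil(R)$ is prime, or equivalently reformulate (2) for ideals $P\subseteq Nil(R)$ that are prime relative to nilpotent elements. Under either version, Steps 1--3 complete the argument.
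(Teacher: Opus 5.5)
Your reading of the statement is correct, and it goes further than the paper. Every prime contains $Nil(R)$, so a prime $P\subseteq Nil(R)$ must equal $Nil(R)$. Hence (2) is vacuous unless $Nil(R)$ is prime, and your ring $A\times A$ with $S=\{1\}$ is a valid counterexample to (2)$\Rightarrow$(1). The paper's proof of that direction must therefore fail, and it does:
\begin{itemize}
\item It writes $t\,a(u_1x_1+\cdots+u_nx_n)=\sum_j w_j\beta_j$ with $\beta_j\in Q'=(P:a)\cap Nil(R)$, and then asserts $\langle p_1,\ldots,p_n,\beta_1,\ldots,\beta_k\rangle\subseteq P$. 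This is unjustified, because the $\beta_j$ lie in $Q'$, which the proof itself says strictly contains $P$.
\item Its strict inclusions $P\subsetneq (P+Ra)\cap Nil(R)$ and $P\subsetneq Q'$ need not hold. Both fail in your example with $P=Nil(R)$, $a=(1,0)$, $b=(0,1)$.
\end{itemize}
Your direction (1)$\Rightarrow$(2) and your Steps 1 and 2 are fine.

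The genuine gap is your closing claim that Steps 1--3 go through once $Nil(R)$ is assumed prime, or once (2) is restated for ideals that are prime relative to nilpotent elements. Even with $a,b\in Nil(R)$, the computation gives $sx=\sum_i u_ip_i+ac$ with $c=\sum_i u_ir_i\in(P:a)$, where the $u_i,r_i$ are arbitrary elements of $R$. So $c$ need not be nilpotent, and $S$-finiteness of $(P:a)\cap Nil(R)$ says nothing about it. Maximality of $P$ gives no control over the non-nil ideal $(P:a)$.

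Both repaired statements are in fact false. Let $A=k[x_1,x_2,\ldots]$, $R=A\ltimes A$ (idealization), and $S=\{1\}$.
\begin{itemize}
\item $Nil(R)=0\ltimes A=(0,1)R$ is principal, and it is prime because $R/Nil(R)\cong A$ is a domain.
\item Since $Nil(R)^2=0$, no ideal properly inside $Nil(R)$ is prime relative to nilpotent elements. So every version of (2) you propose holds.
\item The ideals of $R$ contained in $Nil(R)$ are exactly the $0\ltimes J$ with $J$ an ideal of $A$, and $0\ltimes J$ is finitely generated if and only if $J$ is. Hence $P=0\ltimes(x_1,x_2,\ldots)$ is a nil ideal that is not $S$-finite; it is even maximal in your family $\mathcal F$.
\end{itemize}
Take $a=b=(0,1)$. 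Then $ab=0\in P$, $a\notin P$, and both $P+aR$ and $(P:a)\cap Nil(R)$ equal $Nil(R)$. Yet for $x=(0,j)\in P$ and the generator $(0,1)=0+a\cdot 1$ of $P+aR$, the element $c$ is $(j,m)$ for some $m\in A$. This is not nilpotent when $j\neq 0$, which is precisely where your Step 3 breaks.

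So your proposal correctly establishes (1)$\Rightarrow$(2) and the failure of (2)$\Rightarrow$(1), but it supplies no valid corrected equivalence.
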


\begin{proof}
$(1) \Rightarrow (2)$:  The result is immediate from the definition \ref{n1}.

\noindent
$(2) \Rightarrow (1)$:  
Suppose every prime ideal $P \subseteq Nil(R)$ with $P \cap S = \emptyset$ is $S$-finite. Let $I$ be a nil ideal of $R$. On the contrary, assume that $I$ is not $S$-finite. Consider the collection $\mathcal{F} = \{J \subseteq Nil(R) \mid J~is~ ~not~$S$-finite \}$. Clearly $\mathcal{F}$ is nonempty since $I \in \mathcal{F}$. Ordered by inclusion, every chain in $\mathcal{F}$ has an upper bound given by its union, which is again a nil, non-$S$-finite ideal. By Zorn's lemma, there exists a maximal element $P\in \mathcal{F}$. We claim that $P$ is a prime ideal. Clearly, $P\cap S=\emptyset$ otherwise $P$ is automatic $S$-finite, by \cite[Proposition 2(a)]{ad02}. Suppose there exist $a, b \in R \setminus P$ such that $ab \in P$. Then $P \subsetneq (P+Ra)\cap Nil(R)$. By maximality of $P$, the nil ideal $(P+Ra)\cap Nil(R)$ must be $S$-finite. Hence there exist $s \in S$, $\alpha_{1},\ldots, \alpha_{n} \in (P+Ra)\cap Nil(R)$ such that $s\big((P+Ra)\cap Nil(R)\big) \subseteq \langle \alpha_{1},\ldots,\alpha_{n} \rangle \subseteq (P+Ra)\cap Nil(R)$. Now consider the ideal $Q=(P:a)=\{x\in R\mid ax\in P\}$. Evidently, $b \in Q \setminus P$, so $P \subsetneq Q$. Define $Q' = Q \cap Nil(R)$, which is a nil ideal strictly containing $P$. By maximality of $P$, $Q'$ is $S$-finite. Thus there exist $t\in S$ and $\beta_{1},\ldots,\beta_{k}\in Q'$ such that $tQ' \subseteq \langle \beta_{1},\ldots,\beta_{k}\rangle \subseteq Q'$. Let $x \in P$. Since $sx\in s\big((P+Ra)\cap Nil(R)\big)$, we can write $sx = u_{1}\alpha_{1} + \cdots + u_{n}\alpha_{n}$ with $u_{i}\in R$. Each $\alpha_{i}$ has the form $p_{i} + ax_{i}$ with $p_{i}\in P$, $x_{i}\in R$ and $ax_{i}\in Nil(R)$. Hence $sx = u_{1}p_{1} + \cdots + u_{n}p_{n} + a(u_{1}x_{1} + \cdots + u_{n}x_{n})$. Thus $a(u_{1}x_{1}+\cdots+u_{n}x_{n}) \in P\subseteq Nil(R)$, which implies $u_{1}x_{1}+\cdots+u_{n}x_{n} \in (P:a)=Q$.  Therefore $u_{1}ax_{1}+\cdots+u_{n}ax_{n}\in Q\cap Nil(R)=Q'$. Since $Q'$ is $S$-finite, there exist $w_{1},\ldots,w_{k}\in R$ such that $t(au_{1}x_{1}+\cdots+au_{n}x_{n}) = w_{1}\beta_{1} + \cdots + w_{k}\beta_{k}$. Now, we have $stx = t(u_{1}p_{1}+\cdots+u_{n}p_{n}) + w_{1}\beta_{1}+\cdots+w_{k}\beta_{k}$. This shows that
$uP \subseteq \langle p_{1},\ldots,p_{n},\beta_{1},\ldots, \beta_{k} \rangle \subseteq P$, where $u=st\in S$. Hence $P$ is $S$-finite, contradicting the choice of $P \in \mathcal{F}$. Therefore no such $P$ exists, and so every nil ideal is $S$-finite. Thus $R$ is Nil-$S$-Noetherian.
\end{proof}

 In noncommutative ring theory, one of the classical results is 
\textbf{Levitzki's theorem}, which asserts that every nil one-sided 
ideal of a right Noetherian ring is necessarily nilpotent. Motivated by the importance of the $S$-version, Baeck et al. \cite{jw16} demonstrated that a nil one-sided ideal of a right $S$-Noetherian ring need not be nilpotent (see ~\cite[Example 2.5]{jw16}). We extend this observation to the commutative setting by proving that the same phenomenon occurs for commutative $S$-Noetherian rings when $S \cap Z(R) = \emptyset$, where $Z(R)$ denotes the set of all zero divisors of $R$. To illustrate this, we first present an example of a commutative non-Noetherian ring that admits a nil ideal which is not nilpotent.

\begin{eg}
Let $K$ be a field and consider the non-Noetherian  ring $R = \dfrac{K[x_1, x_2, x_3, \dots]}{\langle x_1^2, x_2^2, x_3^2, \dots \rangle}$. In $R$, each variable satisfies $x_i^2 = 0$, so every $x_i$ is nilpotent. 
Let $I = \langle x_1, x_2, x_3, \dots \rangle \subseteq R$. Then $I$ is a nil ideal since every element of $I$ is nilpotent. However, $I$ is not nilpotent: for each $n \geq 1$ we have $x_1 x_2 \cdots x_n \in I^n \quad \text{and} \quad x_1 x_2 \cdots x_n \neq 0 \text{ in } R$. Thus $I^n \neq 0$ for all $n$, showing that $I$ is a nil ideal but not nilpotent. 

\end{eg}
To establish Levitzki's theorem in the context of $S$-Noetherian rings and Nil-$S$-Noetherian rings, we first require some preliminary 
lemmas, which we state below.

\begin{lem}\label{l0}
Let $R$ be a ring, $S \subseteq R$ be a multiplicatively closed subset with $S \cap Z(R) = \emptyset$. If $I$ and $J$ are ideals of $R$, then there exists $s \in S$ such that $s (I \cap J)= sI \cap sJ$.
\end{lem}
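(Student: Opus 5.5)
Since $S \cap Z(R) = \emptyset$, every $s \in S$ is a non-zero-divisor of $R$, so multiplication by $s$ is injective on $R$. I would therefore show that the equality $s(I\cap J) = sI \cap sJ$ holds for \emph{every} $s \in S$; in particular it holds for some $s\in S$, which is all the statement asks for. (If $S$ is empty the statement is vacuous, so we may assume $S\neq\emptyset$; indeed $1\in S$ under the usual conventions, and $s=1$ already works trivially. The content is that any $s$ works, which is the form needed later.)

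\emph{First inclusion.} Take $x \in I \cap J$. Then $sx \in sI$ and $sx \in sJ$, so $s(I\cap J) \subseteq sI \cap sJ$. This needs no hypothesis on $s$.

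\emph{Reverse inclusion.} This is the only step that uses $S \cap Z(R)=\emptyset$. Let $y \in sI \cap sJ$, and write $y = sa = sb$ with $a\in I$ and $b \in J$. Then $s(a-b)=0$. Because $s \notin Z(R)$, this forces $a = b$. Hence $a \in I\cap J$ and $y = sa \in s(I\cap J)$.

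The main point to check is the convention for $Z(R)$, which is the set of all zero divisors as fixed in the introduction. This ensures that $s\notin Z(R)$ genuinely means $s$ annihilates no nonzero element, so cancellation is valid. No earlier result from the paper is needed beyond this observation.
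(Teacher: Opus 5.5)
Your proposal is correct and follows the paper's proof essentially verbatim: the forward inclusion is immediate, and the reverse inclusion uses $sa=sb \Rightarrow a=b$ because $s\notin Z(R)$. You also make explicit that the equality holds for every $s\in S$, and that $s=1$ makes the existential statement trivial. The paper's argument implicitly works for an arbitrary $s\in S$ in the same way, and the for-every-$s$ form is the one actually used later in the proof of Theorem~\ref{Levitzki-1}.
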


\begin{proof}
    Let $\alpha\in s(I\cap J)$. Then $\alpha=s\beta$, where $\beta\in I\cap J$. This implies that $s\beta\in sI$ and $s\beta\in sJ$, $\alpha=s\beta\in sI\cap sJ$. Thus $s(I\cap J)\subseteq sI\cap sJ$. On the other hand, let $\alpha\in sI\cap sJ$. Then $\alpha=s\beta_1=s\beta_2$ for some $\beta_1\in I$, $\beta_2\in J$. Consequently, $s(\beta_1-\beta_2)=0$, $\beta_1=\beta_2$ since $S\cap Z(R)=\emptyset$. Therefore $\alpha\in s(I\cap J)$, and hence $s(I\cap J)=sI\cap sJ$.
\end{proof}
\noindent
In Lemma~\ref{l0}, the assumption $S \cap Z(R) = \emptyset$ is essential. We illustrate this by providing a counterexample.
\begin{eg}
\noindent
 Let \(R=\mathbb{Z}_{4}\times\mathbb{Z}_{2}\) and $S= \{(0,0), (1,1), (2,0)\}$. Consider the ideals $I=\langle(1,0)\rangle$, and  $J=\langle(1,1)\rangle$, and take \(s=(2,0)\in S\cap Z(R)\). Then we have 	$s(I\cap J)= \langle(0,0)\rangle$ and $sI\cap sJ = \langle(2,0)\rangle$. Hence $  s(I\cap J) \neq sI\cap sJ$, as desired.
\end{eg}

\begin{lem}\label{l1}
		Let $R$ be an $S$-Noetherian ring and $I$ be an ideal disjoint with $S$. Then there exist $s\in S$ and an integer $m$ such that $s(rad(I))^m\subseteq I$, where $rad(I)=\{x\in R| \ x^n\in I \text{ for some } n\in \mathbb{N}\}$ is the radical ideal of $I$.
	\end{lem}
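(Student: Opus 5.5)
Since $R$ is $S$-Noetherian, the ideal $\mathrm{rad}(I)$ is $S$-finite. So we fix $t\in S$ and $a_1,\ldots,a_k\in \mathrm{rad}(I)$ with
\[
t\,\mathrm{rad}(I)\subseteq \langle a_1,\ldots,a_k\rangle\subseteq \mathrm{rad}(I).
\]
For each $i$ there is $n_i\in\mathbb{N}$ with $a_i^{n_i}\in I$. Put $N=n_1+\cdots+n_k$. The plan is a pigeonhole argument on monomials, exactly as in the classical proof that a finitely generated radical has a power inside $I$, with the extra factor $t$ tracked.

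\textbf{Step 1.} We claim $\langle a_1,\ldots,a_k\rangle^{N}\subseteq I$, and in fact the exponent $N-k+1$ already suffices. The ideal $\langle a_1,\ldots,a_k\rangle^{N}$ is generated by the monomials $a_1^{e_1}\cdots a_k^{e_k}$ with $e_1+\cdots+e_k=N$. If $e_i<n_i$ for every $i$, then $\sum e_i\le N-k<N$, which is impossible. Hence some $e_i\ge n_i$, and that monomial is a multiple of $a_i^{n_i}\in I$, so it lies in $I$.

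\textbf{Step 2.} Let $x_1,\ldots,x_N\in\mathrm{rad}(I)$. Each $tx_j$ lies in $\langle a_1,\ldots,a_k\rangle$, so
\[
t^{N}x_1\cdots x_N=(tx_1)\cdots(tx_N)\in \langle a_1,\ldots,a_k\rangle^{N}\subseteq I.
\]
Since $(\mathrm{rad}(I))^N$ is additively generated by such products and $I$ is an ideal, we get $t^N(\mathrm{rad}(I))^N\subseteq I$. Taking $s=t^N\in S$ and $m=N$ proves the lemma.

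The argument is essentially routine, and the hypothesis $I\cap S=\emptyset$ is not actually needed; it only guarantees the statement is nontrivial, since otherwise one could take $s\in I\cap S$ and $m=1$. The only point requiring care is that the scalar $t$ enters once for each factor of the product, so the required element of $S$ is $t^N$ rather than $t$. This is harmless because $S$ is multiplicatively closed.
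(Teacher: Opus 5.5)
Your proof is correct and is essentially the paper's argument: $S$-finiteness of $\mathrm{rad}(I)$ gives $t\,\mathrm{rad}(I)\subseteq J=\langle a_1,\ldots,a_k\rangle\subseteq \mathrm{rad}(I)$, a pigeonhole count on monomials gives $J^m\subseteq I$ (the paper takes $m=\sum_i(n_i-1)+1$, which you also note suffices), and then $t^m(\mathrm{rad}(I))^m\subseteq J^m\subseteq I$. Your remark that the hypothesis $I\cap S=\emptyset$ is not needed for the argument is also correct.
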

	\begin{proof}
		Since $rad(I)$ is  $S$-finite, $t(rad(I))\subseteq J\subseteq rad(I)$ for some $t\in S$ and a finitely generated ideal $J=\langle x_1,x_2,\dots,x_l \rangle$ of $R$. Suppose $n_i\in \mathbb{N}$ be such that $x_i^{n_i}\in I$. Take $m=\sum_{i=1}^l (n_i-1)+1$. Then $J^m\subseteq I$. Consequently, $s(rad(I))^m\subseteq J^m\subseteq I$, where $s=t^m$. 
	\end{proof}
    
\begin{theorem} \label{Levitzki-1}\textbf{($S$-version of Levitzki's theorem for $S$-Noetherian Rings)}
Let $R$ be a ring, $S\subseteq R$ be a multiplicative set with $S\cap Z(R) = \emptyset$. If $R$ is an $S$-Noetherian ring, then every nil ideal $I$ with $I\cap S=\emptyset$ is nilpotent.
\end{theorem}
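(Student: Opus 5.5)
The plan is to reduce to Lemma~\ref{l1} applied to the zero ideal, and then cancel the element of $S$ using the hypothesis $S\cap Z(R)=\emptyset$. Let $I$ be a nil ideal with $I\cap S=\emptyset$. Since $I$ is nil, $I\subseteq \mathrm{Nil}(R)=rad(0)$. The zero ideal $(0)$ is disjoint from $S$: if $0\in S$ then $0\in S\cap Z(R)$ (as $0\cdot 1=0$ with $1\neq 0$), which the hypothesis forbids. So we may apply Lemma~\ref{l1} to $(0)$ in the $S$-Noetherian ring $R$.

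Lemma~\ref{l1} then yields $s\in S$ and an integer $m$ with
\[
s\,(rad(0))^m\subseteq (0),
\]
that is, $s\,\mathrm{Nil}(R)^m=0$. Because $I\subseteq \mathrm{Nil}(R)$, we have $I^m\subseteq \mathrm{Nil}(R)^m$, and hence $sI^m=0$.

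Next take any $x\in I^m$. Then $sx=0$. Since $s\notin Z(R)$, $s$ is a non-zero-divisor, so $x=0$. Therefore $I^m=0$ and $I$ is nilpotent. The hypothesis $I\cap S=\emptyset$ is not really needed beyond guaranteeing the setup; it holds automatically, since elements of $S$ are non-zero-divisors and so are not nilpotent.

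The main point to check is that Lemma~\ref{l1} applies to $I=(0)$, i.e.\ that $(0)\cap S=\emptyset$; this is the observation made in the first paragraph. A fallback is to apply Lemma~\ref{l1} directly to $rad(0)$, which is $S$-finite: take a finitely generated $J=\langle x_1,\dots,x_l\rangle$ with $t\,\mathrm{Nil}(R)\subseteq J\subseteq \mathrm{Nil}(R)$. The $x_i$ are nilpotent, so $J^N=0$ for suitable $N$. Then $t^N I^N\subseteq J^N=0$, and cancelling $t^N$ gives $I^N=0$.
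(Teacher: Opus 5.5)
Your proof is correct, and it takes a more direct route than the paper's. The paper first invokes the existence of a minimal $S$-primary decomposition $(0)=\bigcap_{i=1}^{n}Q_i$ in the $S$-Noetherian ring $R$ (an external result of Singh et al.) and writes $\mathrm{Nil}(R)=\bigcap_i P_i$ with $P_i=rad(Q_i)$. It then applies Lemma~\ref{l1} to each component to get $s_iP_i^{k_i}\subseteq Q_i$, and glues these together using Lemma~\ref{l0} to obtain $s\,\mathrm{Nil}(R)^k\subseteq\bigcap_i Q_i=(0)$, with $s=\prod_i s_i$ and $k=\max_i k_i$. Only then does it cancel $s$.

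You instead apply Lemma~\ref{l1} once, to the ideal $(0)$ itself, which gives $s\,\mathrm{Nil}(R)^m=0$ immediately. The decomposition is not needed, and neither is Lemma~\ref{l0}: even in the paper's argument only the trivial inclusion $s(A\cap B)\subseteq sA\cap sB$ is used.

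Your route buys three things. First, it is self-contained and relies on no decomposition theorem. Second, as your fallback makes explicit, it only uses that $\mathrm{Nil}(R)$ is $S$-finite, so the conclusion already holds when $R$ is merely Nil-$S$-Noetherian with $S\cap Z(R)=\emptyset$; this is just Theorem~\ref{Levitzki-2} followed by cancellation of a non-zero-divisor. Third, it makes clear that $S\cap Z(R)=\emptyset$ is used only in that final cancellation, and that the hypothesis $I\cap S=\emptyset$ holds automatically. For this statement the paper's route gains nothing beyond tying the result to its $S$-primary decomposition theme.

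Your check that $(0)\cap S=\emptyset$ is fine. In any case, the proof of Lemma~\ref{l1} never actually uses the disjointness hypothesis.
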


\begin{proof}
Since $R$ is $S$-Noetherian, it follows from \cite[Theorem~2.9]{ts24} that the zero ideal $(0)$ admits a minimal $S$-primary decomposition $(0) = \bigcap_{i=1}^{n} Q_{i}$, where each $Q_i$ is a $P_i$-$S$-primary ideal and $P_{i} = rad(Q_{i})$ is an $S$-prime ideal. Consequently, $Nil(R)= rad(0)=\bigcap\limits_{i=1}^{n} P_{i}$.
By Lemma~\ref{l1}, for each $i=1,\ldots,n$, there exist an integer $k_i \geq 1$ and an element $s_i \in S$ such that $s_{i} P_i^{\,k_i}\subseteq Q_i$. Let $k = \max\{k_1, \dots, k_n\}$ and $s = \prod_{i=1}^{n} s_{i}$. Then, by Lemma~\ref{l0}, $s \,(Nil(R))^{k} 
= s \,\Big(\bigcap\limits_{i=1}^{n} P_{i}\Big)^{k}\subseteq\; s \Big(\bigcap\limits_{i=1}^{n} P_{i}^{\,k_{i}}\Big)
= \bigcap\limits_{i=1}^{n} sP^{k_{i}}_{i}$. Consequently, $s (Nil(R))^{k}= sP^{k_{1}}_{1}\cap \cdots \;\cap\;sP^{k_{n}}_{n}\subseteq s_1P^{k_{1}}_{1}\cap \cdots \;\cap\; s_nP^{k_{n}}_{n}\subseteq Q_{1}\cap\cdots\cap Q_{n}
= (0)$, by Lemma \ref{l1}. Hence the nilradical $Nil(R)$ is $S$-nilpotent. Since $I \subseteq Nil(R)$, it follows that $sI^k \;\subseteq\; s(Nil(R))^k \;=\; (0)$, $sI^{k}=0$. Since $S\cap Z(R)=\emptyset$, so $I^{k}=0$. Thus $I$ is a nilpotent ideal of $R$.
\end{proof}
Next, we extend the above result to the setting of Nil-$S$-Noetherian 
rings, where no additional condition on $S$ is required.

\begin{theorem} \label{Levitzki-2}\textbf{($S$-version of Levitzki's theorem for Nil-$S$-Noetherian Rings)}
Let $R$ be a commutative ring and let $S\subseteq R$ be a multiplicative subset. If $R$ is Nil-$S$-Noetherian, then every nil ideal $I$ (disjoint from $S$) is  $S$-nilpotent.
\end{theorem}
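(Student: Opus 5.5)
The plan is to copy the argument of Lemma~\ref{l1}, with the nilpotency of the generators playing the role of membership in $I$. Here, as in the proof of Theorem~\ref{Levitzki-1}, ``$S$-nilpotent'' means that there exist $s\in S$ and $k\in\mathbb{N}$ with $sI^{k}=(0)$. Unlike Theorem~\ref{Levitzki-1}, we only need $S$-nilpotence, not honest nilpotence. So we never have to cancel $s$, and the hypothesis $S\cap Z(R)=\emptyset$ and Lemma~\ref{l0} should not be needed.

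First, since $I$ is a nil ideal and $R$ is Nil-$S$-Noetherian, Definition~\ref{n1} gives $t\in S$ and elements $x_1,\ldots,x_l\in I$ with
\[
tI\subseteq J:=\langle x_1,\ldots,x_l\rangle\subseteq I .
\]
Each $x_i$ lies in $I\subseteq Nil(R)$, so we may choose $n_i\in\mathbb{N}$ with $x_i^{n_i}=0$.

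Second, put $m=\sum_{i=1}^{l}(n_i-1)+1$ and show $J^{m}=(0)$. The ideal $J^m$ is generated by the monomials $x_1^{e_1}\cdots x_l^{e_l}$ with $\sum e_i=m$. By the pigeonhole principle, some $e_i\ge n_i$, so every such monomial vanishes. This is exactly the computation in Lemma~\ref{l1}, with $I$ replaced by $(0)$.

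Finally, set $s=t^{m}\in S$. Then
\[
sI^{m}=(tI)^{m}\subseteq J^{m}=(0),
\]
so $I$ is $S$-nilpotent with exponent $m$ and witness $s$.

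The only point needing care is the inclusion $t^mI^m=(tI)^m$, which holds because $(tI)^m$ is generated by the products $t a_1\cdots t a_m$ with $a_j\in I$. The hypothesis $I\cap S=\emptyset$ is not actually used. If $I\cap S\neq\emptyset$, the statement holds anyway: an element $u\in I\cap S$ is nilpotent, so $0\in S$ and $0\cdot I=(0)$. The assumption is therefore harmless, and I expect no genuine obstacle in this proof.
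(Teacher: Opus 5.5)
Your proposal is correct and is essentially the paper's proof: you use $S$-finiteness to get $tI\subseteq J\subseteq I$ with $J$ generated by finitely many nilpotents, then the pigeonhole exponent $m=\sum(n_i-1)+1$ gives $J^m=(0)$, and finally $t^mI^m=(tI)^m\subseteq J^m=(0)$. Your side remark is also accurate: the hypothesis $I\cap S=\emptyset$ is never used in the paper's argument either, and it is harmless, since $I\cap S\neq\emptyset$ forces $0\in S$.
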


\begin{proof}
Let $I$ be a nil ideal of $R$. By the Nil-$S$-Noetherian hypothesis, $I$ is $S$-finite; hence there exist $s\in S$ and a finitely generated ideal $J\subseteq R$ with $sI \subseteq J\subseteq I$. Write $J=\langle a_1,\dots,a_m \rangle$, where $a_{i}\in R$ for all $i=1,\ldots, m$. Since $J\subseteq I$ and $I$ is nil, each generator $a_i$ is nilpotent. Thus for each $i$ there exists $n_i\ge1$ with $a_i^{n_i}=0$. Set $N=\sum\limits_{i=1}^m (n_i-1)+ 1$. Thus $J^N = 0$. Now observe that $(sI)^N = s^N I^N \subseteq J^N = 0$, so $s^N I^N = 0$. Setting $t= s^N \in S$ and $n= N$ yields $tI^n = 0$, which shows that $I$ is $S$-nilpotent.
\end{proof}

Recall \cite{ad02} that a multiplicatively closed subset $S$ of a ring $R$ is said to be \textit{anti-Archimedean} if for each $s \in S$, $\left(\bigcap_{n=1}^{\infty} s^n R \right) \cap S \neq \emptyset$.

\begin{theorem}\label{Hilbert-1} \textbf{(Hilbert basis theorem for Nil-$S$-Noetherian rings)} Let $R$ be a ring and $S \subseteq R$ an anti-Archimedean multiplicatively closed subset. If $R$ is Nil-$S$-Noetherian, then $R[x]$ is Nil-$S$-Noetherian.
\end{theorem}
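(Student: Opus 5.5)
The plan is to adapt the classical leading-coefficient proof of the Hilbert basis theorem, as Anderson and Dumitrescu do for $S$-Noetherian rings \cite{ad02}, to a nil ideal $I$ of $R[x]$. The first observation is that $I$ lives entirely inside $Nil(R)[x]$. If $f=\sum c_jx^j\in I$, then $f$ is nilpotent, and a nilpotent polynomial over a commutative ring has nilpotent coefficients. Consequently every ideal of $R$ built from coefficients of elements of $I$ is a nil ideal of $R$, and hence is $S$-finite by hypothesis.

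\textbf{Step 1 (high degrees).} Let $L$ be the set of leading coefficients of elements of $I$, together with $0$; this is a nil ideal of $R$. Choose $s\in S$ and $a_1,\dots,a_n\in L$ with $sL\subseteq\langle a_1,\dots,a_n\rangle\subseteq L$. Pick $f_i\in I$ with leading coefficient $a_i$, put $d_i=\deg f_i$ and $d=\max_i d_i$, and let $J_1=\langle f_1,\dots,f_n\rangle\subseteq I$.

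For $f\in I$ with $\deg f=e\ge d$, write $s\cdot\mathrm{lc}(f)=\sum r_ia_i$. Then
\[
sf-\sum r_ix^{e-d_i}f_i\in I
\]
has degree $<e$. Iterating this $m=e-d+1$ times gives $s^mf\in J_1+I_{<d}$, where $I_{<d}=I\cap(R+Rx+\cdots+Rx^{d-1})$.

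\textbf{Step 2 (anti-Archimedean step; main obstacle).} The exponent $m$ depends on $\deg f$, so Step 1 alone gives no uniform element of $S$. This is where the anti-Archimedean hypothesis is used. Choose $t\in\bigl(\bigcap_{n\ge1}s^nR\bigr)\cap S$, so that for every $m$ we can write $t=s^m r_m$ with $r_m\in R$. Then
\[
tf=r_m s^mf\in J_1+I_{<d}
\]
for every $f\in I$, uniformly. One point to check is that the $I_{<d}$-component really lies in $I$. It does, because both $s^mf$ and the subtracted combination lie in $I$. I expect this uniformization to be the only delicate point; everything else is bookkeeping.

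\textbf{Step 3 (low degrees).} For each $k<d$, let $L_k$ be the set of coefficients of $x^k$ of elements of $I$ of degree $\le k$, together with $0$; this is again a nil ideal of $R$. Choose $s_k\in S$ and finitely many $g_{k,j}\in I$ of degree $\le k$ whose $x^k$-coefficients $b_{k,j}$ satisfy $s_kL_k\subseteq\langle b_{k,j}\rangle_j\subseteq L_k$. Put $s'=\prod_{k<d}s_k$.

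Descending from degree $d-1$ to degree $0$, the same reduction as in Step 1, now taking only $d$ steps, shows that $s'^{\,d}g\in J_2:=\langle g_{k,j}\rangle\subseteq I$ for every $g\in I_{<d}$.

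\textbf{Conclusion.} Set $u=t\,s'^{\,d}\in S$ and $J=J_1+J_2$. Then $J$ is a finitely generated ideal contained in $I$, and Steps 2 and 3 give $uI\subseteq J\subseteq I$. Hence $I$ is $S$-finite, and therefore $R[x]$ is Nil-$S$-Noetherian. Alternatively, one could finish via the chain characterization of Theorem~\ref{j1}, but the direct argument above is cleaner.
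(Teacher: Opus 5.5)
Your proof is correct, and its core is the same as the paper's. The leading-coefficient ideals of a nil ideal of $R[x]$ (your $L$ and $L_k$, the paper's $P_n$) are nil ideals of $R$, because nilpotent polynomials have nilpotent coefficients, so they are $S$-finite. Degrees are lowered by subtracting $R$-multiples of finitely many chosen polynomials. The anti-Archimedean hypothesis then replaces the degree-dependent power $s^m$ by a single $t\in S$.

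The genuine difference is in how the argument is framed. The paper runs this computation only for a nil prime ideal $P$ of $R[x]$ disjoint from $S$, and then concludes via Theorem~\ref{cohen}. You run it for an arbitrary nil ideal $I$ and conclude directly that $uI\subseteq J\subseteq I$.

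Your route is the better one, for two reasons:
\begin{itemize}
\item The paper's computation never uses that $P$ is prime, so the detour through primes gains nothing.
\item The detour makes the result depend on Theorem~\ref{cohen}, which is problematic as stated. Every prime contains $\mathrm{Nil}(R)$, so a prime $P\subseteq \mathrm{Nil}(R)$ must equal $\mathrm{Nil}(R)$. Condition (2) therefore says nothing unless $\mathrm{Nil}(R)$ is itself prime, and in fact (2)$\Rightarrow$(1) fails. Take $S=\{1\}$ and $R=A\times A$ with $A=k[x_1,x_2,\dots]/(x_1^2,x_2^2,\dots)$. Then (2) holds vacuously, while $\mathrm{Nil}(R)$ is not finitely generated.
\end{itemize}

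Your low-degree step also avoids a slip in the paper. You choose $g_{k,j}\in I$ of degree at most $k$ with prescribed $x^k$-coefficient. The paper instead takes monomials $a_{ki}x^k\in P$, which need not exist in general.
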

\begin{proof}
Let $P$ be a nil prime ideal of $R[x]$ such that $P\cap S\neq\emptyset$. For each $n\ge0$, define $P_n=\{\,a\in R\mid \exists~  f(x)\in P \text{ with } \deg f=n \text{ and leading coefficient } a\,\}\cup\{0\}$. Since a polynomial is nilpotent if and only if each of its coefficients is nilpotent, it follows that each $P_n$ is a nil ideal of $R$. Suppose $r$ is the smallest integer such that $P_r\neq0$. Then $P_r\subseteq P_{r+1}\subseteq\cdots$ is an ascending chain of ideals of $R$. Also $P\cap S\neq\emptyset$ and $0\notin S$, so $P_i\cap S=\emptyset$ since $P_i\subseteq Nil(R)$ and $Nil(R)\cap S=\emptyset$. Let $I=\bigcup_{n\ge r}P_n$. Then $I$ is a nil ideal of $R$ as each $P_n$ is nil. Since $R$ is nil-$S$-Noetherian, $I$ is $S$-finite. Hence there exists $t\in S$ such that $tI\subseteq\langle b_1,b_2,\ldots,b_m\rangle$ for some $b_i\in R$. Take $f_i\in P$ such that $f_i(x)=b_i x^{d_i}+\text{lower terms}$ for $i=1,2,\ldots,m$. Put $d=\max\{d_i\}_{i=1}^{m}$. Since each $P_k$ is a nil ideal of $R$ and $R$ is nil-$S$-Noetherian, there exists $t_k\in S$ such that $t_kP_k\subseteq\langle a_{k1},a_{k2},\ldots,a_{kn_k}\rangle\subseteq P_k$. For each $a_{ki}$, take $f_{ki}\in P$ such that $f_{ki}(x)=a_{ki}x^k$ for all $k\in\{r,r+1,\ldots,d\}$. Let $f=\sum_{i=0}^{\nu}p_ix^i\in P$. If $\nu>d$, then $p_\nu\in I$. This implies that $tp_\nu=r_1b_1+r_2b_2+\cdots+r_mb_m$ for some $r_i\in R$. Hence $tf-tp_\nu x^\nu=tf-\sum_{i=1}^{m}r_ib_ix^\nu=tf-\sum_{i=1}^{m}r_if_i x^{\nu-d_i}\in P$ is a polynomial of degree at most $\nu-1$. By repeating this process, we obtain a polynomial $h=\sum_{i=1}^{l}q_ix^i$ with $l\le d$. Since $q_l\in P_l$, we have $t_lq_l\in\langle a_{l1},a_{l2},\ldots,a_{ln_l}\rangle\subseteq P_l$. Write $t_lq_l=c_1a_{l1}+c_2a_{l2}+\cdots+c_{n_l}a_{ln_l}$ for some $c_{i} \in R$. Then $t_lh-t_lq_lx^l=t_lh-\sum_{i=1}^{n_l}c_ia_{li}x^l=t_lh-\sum_{i=1}^{n_l}c_if_{li}$ is a polynomial of degree at most $l-1$. Repeating this process, we obtain $t^{\nu-l}\prod_{k=r}^{l}t_k f=g_1+g_2$ for some $g_1\in\langle f_1,\ldots,f_m,f_{r1},\ldots,f_{rn_r},f_{(r+1)1},\ldots,f_{(r+1)n_{r+1}},\ldots,f_{d1},\ldots,f_{dn_d}\rangle$ and $g_2\in P$ with degree at most $r-1$. But $r$ was chosen to be the least integer such that $P_r\neq0$, hence $g_2=0$. Since $S$ is anti-Archimedean, $(\bigcap_{n\ge1}t^nR)\cap S\neq\emptyset$. Let $w\in(\bigcap_{n\ge1}t^nR)\cap S$. Then $w\prod_{k=r}^{l}t_k f\in\langle f_1,\ldots,f_m,f_{r1},\ldots,f_{rn_r},f_{(r+1)1},\ldots,f_{(r+1)n_{r+1}},\ldots,f_{d1},\ldots,f_{dn_d}\rangle$.  Put $s=w\prod_{k=r}^{l}t_k$. Then $sP\subseteq\langle f_1,\ldots,f_m,f_{r1},\ldots,f_{rn_r},f_{(r+1)1},\ldots,f_{(r+1)n_{r+1}},\ldots,f_{d1},\ldots,f_{dn_d}\rangle\subseteq P$. Therefore $P$ is $S$-finite. Hence by Theorem~\ref{cohen}, $R[x]$ is a Nil-$S$-Noetherian ring.

\end{proof}

\begin{theorem} \label{Hilbert-2}
	Let $R$ be a ring and $S \subseteq R$ be an anti-Archimedean multiplicative set  consisting of non-zero divisors. If $R$ is Nil-$S$-Noetherian, then the power series ring $R[[x]]$ is also Nil-$S$-Noetherian.
\end{theorem}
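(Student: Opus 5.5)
We prove the statement the way one proves that $R[[x]]$ inherits the Noetherian property, but working with lowest-order coefficients in place of leading coefficients, and then invoke the Cohen-type criterion, Theorem~\ref{cohen}. By Theorem~\ref{cohen}, it suffices to show that every prime ideal $P\subseteq Nil(R[[x]])$ with $P\cap S=\emptyset$ is $S$-finite in $R[[x]]$. We regard $S$ as a subset of $R[[x]]$ via constants.

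First we record a fact about such $P$. Every element of $P$ is nilpotent, and if $f=\sum a_ix^i$ satisfies $f^n=0$, then $a_0^n=0$. Next, $g=f-a_0$ lies in $P+R$; more usefully, we avoid needing all coefficients to be nilpotent and use only the order filtration. For $n\ge 0$, set
\[
P_n=\{a\in R \mid a \text{ is the coefficient of } x^n \text{ in some } f\in P\cap x^nR[[x]]\}.
\]
This is the set of ``initial coefficients'' of elements of order $\ge n$, and $P_0\subseteq P_1\subseteq\cdots$ because $x\in R[[x]]$. The initial coefficient of $f\in P$ of order $n$ is the initial coefficient of $f/x^n$ after shifting, and $(f/x^n)^k$ has constant term $a^k$. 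The delicate point is that $f/x^n$ need not lie in $P$. However, $x^n(f/x^n)=f$ is nilpotent, so $(f/x^n)$ is nilpotent modulo $\mathrm{Ann}(x^{nk})$, which is $0$ in $R[[x]]$ because $x$ is a non-zero divisor. Thus $f/x^n$ is nilpotent and $a$ is nilpotent. Hence each $P_n$ and $I=\bigcup_n P_n$ are nil ideals of $R$.

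By hypothesis there are $t\in S$ and $b_1,\dots,b_m\in I$ with $tI\subseteq\langle b_1,\ldots,b_m\rangle\subseteq I$. By Theorem~\ref{j1}(2), the chain $(P_n)$ is also $S$-stationary, so there is $d$ with each $b_i\in P_d$ after enlarging, say $t'P_n\subseteq P_d$ for all $n$. Choose $f_i\in P$ of order $\ge d$ with initial coefficient $b_i$ at $x^d$. For $k<d$, choose $t_k\in S$ and finitely many $f_{kj}\in P$ whose $x^k$-coefficients generate an ideal containing $t_kP_k$; this uses the Nil-$S$-Noetherian property of $R$ again. Set $u=t\,t'\prod_{k<d}t_k$ and $F=\langle f_i,f_{kj}\rangle\subseteq P$.

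Given $f\in P$, we first eliminate its coefficients of degree $<d$ one at a time. After multiplying by $t_k$, we subtract an $R$-combination of the $f_{kj}$, and the difference stays in $P$. This costs the factor $\prod t_k$ and leaves an element $g\in P$ of order $\ge d$. For $g$ we run the usual successive-approximation argument. At step $n\ge d$, the $x^n$-coefficient of the current remainder lies in $P_n\subseteq I$. Multiplying by $t\,t'$ writes it as $\sum r_{n,i}b_i$, and we subtract $\sum r_{n,i}x^{n-d}f_i$. The main obstacle is that each step costs another factor of $tt'$, so the naive argument yields $(tt')^N g\equiv(\text{element of }F)$ only modulo $x^{N+1}$.

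This is exactly where the anti-Archimedean hypothesis and the non-zero-divisor hypothesis enter. We choose $w\in\bigl(\bigcap_n (tt')^nR\bigr)\cap S$ and write $w=(tt')^n w_n$. We then rescale the step-$n$ coefficients as $c_i=\sum_n w_n r_{n,i}x^{n-d}$, which converge in $R[[x]]$. Cancellation of $(tt')^n$, which is legitimate because $S$ consists of non-zero divisors, should give $w g=\sum_i c_if_i$ coefficient by coefficient. Verifying this coefficient identity is the step requiring the most care. We must make sure the successive choices of the $r_{n,i}$ are made compatibly, defining the remainders recursively with the normalizing factor $w_n$ built in from the start, so that $w g-\sum c_if_i$ has every coefficient zero after cancelling powers of $tt'$. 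Finally, with $s=w\prod_{k<d}t_k\in S$, we obtain $sP\subseteq F\subseteq P$, so $P$ is $S$-finite, and Theorem~\ref{cohen} finishes the proof.
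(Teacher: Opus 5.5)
Your argument is correct, and it differs from the paper's at the key step. The paper uses only the constant-term ideal $P(0)$ and relies on primeness. Since $P$ is nil, $x\notin P$, so $xf_1\in P$ forces $f_1\in P$. One can then peel off constant terms indefinitely using generators of $P(0)$, and finally clear the accumulated powers of $s$ with an element of $(\bigcap_n s^nR)\cap S$, much as you do. In your notation, primeness gives $P_n=P_0$ for every $n$, so the chain argument and the elimination below $x^d$ are not needed.

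Your lowest-order-coefficient ideals $P_n$ cost that extra bookkeeping. In exchange, nothing in your argument uses that $P$ is prime or disjoint from $S$: run verbatim on an arbitrary nil ideal $J$ of $R[[x]]$, it shows that $J$ is $S$-finite. You should therefore drop the appeal to Theorem~\ref{cohen}, and this is not cosmetic. Every prime contains $\mathrm{Nil}(R)$, so a prime $P\subseteq\mathrm{Nil}(R)$ equals $\mathrm{Nil}(R)$, and condition (2) there says nothing about smaller nil ideals. For example, take a field $k$, $R=k[y_1,y_2,\ldots][z]/(z^2)$ and $S=\{1\}$. Then $\mathrm{Nil}(R)=zR$ is a principal prime, yet the ideal generated by $zy_1,zy_2,\ldots$ is nil and not finitely generated. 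So (2)$\Rightarrow$(1) of Theorem~\ref{cohen} fails as stated. Your direct argument is what actually proves the theorem; the paper's primeness-based proof rests on that implication.

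The step you left at ``should give'' needs no special compatibility among the choices of the $r_{n,i}$; only the indexing needs fixing. Let $u=tt'$ (in fact $t$ alone suffices, since every coefficient you meet lies in $I$), $g_d=g$, and $g_{n+1}=ug_n-\sum_i r_{n,i}x^{n-d}f_i$. Induction gives $u^Mg=g_{d+M}+\sum_{n=d}^{d+M-1}u^{d+M-1-n}\sum_i r_{n,i}x^{n-d}f_i$. Write $w=u^jw_j$ for all $j$. Cancelling the non-zero-divisor $u^{n+1-d}$ gives $w_Mu^{d+M-1-n}=w_{n+1-d}$. Now set $c_i=\sum_{n\ge d}w_{n+1-d}\,r_{n,i}x^{n-d}$ (index $n+1-d$, not $n$). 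Then $wg-\sum_ic_if_i$ lies in $x^{d+M}R[[x]]$ for every $M$, because each $f_i\in x^dR[[x]]$, so it is $0$. Also state explicitly that the $f_{kj}$ are taken in $P\cap x^kR[[x]]$.
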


\begin{proof}
Let $P$ be a nil prime ideal of $R[[x]]$ such that $P\cap S\neq\emptyset$.	 Let
	$ P(0) = \{f(0) \mid f(x) \in P\} $. 
	Let $f= \sum_{i=0}^{\infty}a_{i}x^{i} \in P$. Then $f$ is nilpotent as $P$ is nil ideal. This implies that each coefficient $a_{n}$ is nilpotent, in particular, its constant term $a_{0} = f(0)$ is nilpotent.
	Consequently, $P(0)$ is a nil ideal of $R$. This implies that $P(0)$ is $S$-finite since $R$ is Nil-$S$-Noetherian. Thus, there exists $s \in S$ and a finite set of polynomials $\{p_1(x), p_2(x), \dots, p_n(x)\} \subseteq P$ such that
	$ sP(0) \subseteq \langle p_1(0), p_2(0), \dots, p_n(0) \rangle \subseteq P(0) $.
	If $x \in P$, then we can write $P = P(0) + xR[[x]]$. It follows that
	 $sP \subseteq \langle p_1(0), p_2(0), \dots, p_n(0), x \rangle \subseteq P$.
	Hence $P$ is $S$-finite.
	 Now suppose  $x \notin P$.
	Let $f \in P$. Then $f(0) \in P(0)$. Write
	$ sf(0) = \sum_{i=1}^{n} r_{0i} p_i(0) \quad \text{for some } r_{0i} \in R$.
	This implies that
	$ sf -  sf(0) = sf- \sum_{i=1}^{n} r_{0i} p_i(0) = xf_1 $
	for some $f_1 \in R[[x]]$. Since $xf_1 \in P$ and $x \notin P$ (as $P$ is a prime ideal), it must be that $f_1 \in P$. 	
	By repeating this iterative process, we can similarly write
	$sf_1 - \sum_{i=1}^{n} r_{1i} p_i(0) = xf_2 $,
	where $f_2 \in P$. Continuing this construction, we obtain the representation
	$ f = \sum_{i=1}^{n} p_i(0) \left( \sum_{j=0}^{\infty} \frac{r_{ji}}{s^{j+1}} x^j \right) $.
	Note that the coefficients $\frac{r_{ji}}{s^{j+1}}$ belong to the localization $S^{-1}R$. Since $S$ is given to be an anti-Archimedean set, we have
	$ \left( \bigcap_{i=1}^{\infty} s^i R \right) \cap S \neq \emptyset $.
	Let $t \in \left( \bigcap_{i=1}^{\infty} s^i R \right) \cap S$. Then for each $j$, $t = s^j c_j$ for some $c_j \in R$. 
	Multiplying the expression for $f$ by $t$, we get
	$ tf = \sum_{i=1}^{n} p_i(0) \left( \sum_{j=0}^{\infty} c_{j+1} r_{ji} x^j \right) \in \langle p_1(0), \dots, p_n(0) \rangle $.
	This implies that
	$ tP \subseteq \langle p_1(0), p_2(0), \dots, p_n(0) \rangle \subseteq P$.
	Thus $P$ is $S$-finite. Hence by Theorem~\ref{cohen}, $R[[x]]$ is a Nil-$S$-Noetherian ring.
	
\end{proof}

It is well known that the Lasker-Noether theorem holds for Noetherian rings; that is, every ideal of a Noetherian ring can be expressed as a finite intersection of primary ideals. Singh et al.~\cite{ts24} established the existence and uniqueness of $S$-primary decomposition in $S$-Noetherian modules. Finally, we prove the existence of $S$-primary decomposition for Nil-$S$-Noetherian rings.

\begin{deff}\label{S-iir.}\cite{ts24}
	An ideal $Q$ (disjoint from $S$) of the ring $R$ is called  $S$-irreducible if $s(I\cap K)\subseteq Q \subseteq I\cap K$ for some $s\in S$ and some ideals $I$, $K$ of $R$, then there exists $s'\in S$ such that either $ss'I\subseteq Q$ or $ss'K\subseteq Q$.
\end{deff}

\noindent
It is clear from the definition that every irreducible ideal is an $S$-irreducible ideal. However, the following example shows that an $S$-irreducibile ideal need not be irreducible. 
\begin{eg}\label{fm}
	\noindent
	Let $R=\mathbb{Z}$, $S=\mathbb{Z}\setminus 2\mathbb{Z}$ and $I=12\mathbb{Z}$. Since $I=3\mathbb{Z}\cap 4\mathbb{Z}$, therefore $I$ is not an irreducible ideal of $R$. Now, take $s=3\in S$. Then $3(4\mathbb{Z})=12\mathbb{Z}\subseteq I$. Thus $I$ is an $S$-irreducible ideal of $R$.
\end{eg}
\noindent
Recall \cite[Definition 2.1]{me22}, a proper ideal $Q$ of a ring $R$ disjoint from $S$ is said to be $S$-primary if there exists an $s \in S$ such that for all $a, b \in R$, if $ab \in Q$, then either $sa \in Q$ or $sb \in rad(Q)$. Following from \cite{ts24}, let
$I$ be an ideal of $R$ such that $I\cap S=\emptyset$. Then $I$ admits $S$-primary decomposition if $I$ can be written as a finite intersection of $S$-primary ideals of $R$.\\

\noindent
Now, we extend Lasker-Noether theorem for Nil-$S$-Noetherian rings. We start with the following lemma.
\begin{lem}\label{sp}
	\noindent
	Let $R$ be a Nil-$S$-Noetherian ring. Then every $S$-irreducible nil ideal of $R$ is an $S$-primary.
\end{lem}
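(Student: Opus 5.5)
Following the classical proof of Lasker--Noether via the colon-chain $(Q:b)\subseteq(Q:b^2)\subseteq\cdots$, I would replace the ascending chain condition by the $S$-stationarity of chains of nil ideals given in Theorem~\ref{j1}. Let $Q$ be an $S$-irreducible nil ideal, so $Q\cap S=\emptyset$ and $Q\subseteq Nil(R)$. Since $Q\subseteq Nil(R)$ and $Nil(R)$ is the intersection of all primes, $rad(Q)=Nil(R)$. Hence the target condition ``$sb\in rad(Q)$'' reduces to $sb\in Nil(R)$. Take $a,b\in R$ with $ab\in Q$ and suppose $sb\notin Nil(R)$ for the relevant $s$; we must then produce $sa\in Q$ with an $s$ independent of $a,b$.

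The key steps are as follows. First, consider the chain $(Q:b)\cap Nil(R)\subseteq(Q:b^2)\cap Nil(R)\subseteq\cdots$ of nil ideals. By Theorem~\ref{j1}(2) it is $S$-stationary: there are $k$ and $u\in S$ with $u\big((Q:b^n)\cap Nil(R)\big)\subseteq (Q:b^k)\cap Nil(R)$ for all $n$. Second, form the two ideals $I=Q+Rb^k$ and $K=(Q:b^k)$, and show $u(I\cap K)\subseteq Q\subseteq I\cap K$. The inclusion $Q\subseteq I\cap K$ is clear. For the other, take $x=q+rb^k\in K$. Then $xb^k\in Q$ gives $rb^{2k}\in Q$, so $r\in(Q:b^{2k})$. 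To apply stationarity we need $r$ (or some multiple of it) to be nil. Here I would intersect everything with $Nil(R)$ from the start: since $Q$ is nil, work with $I\cap K\cap Nil(R)$, and note that elements of $I\cap K$ that lie in $Q$-related expressions are nil. This gives $ur\in(Q:b^k)$, hence $urb^k\in Q$ and $ux\in Q$. Third, apply $S$-irreducibility: there is $s'\in S$ with either $us'(Q+Rb^k)\subseteq Q$ or $us'(Q:b^k)\subseteq Q$. In the first case $us'b^k\in Q\subseteq Nil(R)$, so $us'b\in Nil(R)=rad(Q)$. In the second case, $a\in(Q:b)\subseteq(Q:b^k)$ gives $us'a\in Q$.

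The main obstacle is uniformity together with nil-ness. The element $us'$ produced above depends on $b$ (through $k$ and $u$), whereas the definition of $S$-primary demands a single $s$ working for all $a,b$. Also, $r$ need not be nilpotent, so Theorem~\ref{j1}(2) does not obviously apply. For nil-ness, I would first try to handle the case $b\in Nil(R)$ trivially: then $b\in rad(Q)$ and nothing is needed. For $b\notin Nil(R)$, I would try replacing the chain by $(Q:b^n)\cap Nil(R)$ and splitting $r$ accordingly, possibly using that $Q$ nil forces $rb^{2k}$ nil. For uniformity, I would argue as in the $S$-primary literature and apply Theorem~\ref{j1}(4) to the family of all nil ideals $\{(Q:c)\cap Nil(R)\}$ to extract one $S$-maximal element and one $s$. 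Alternatively, if needed, I would accept the pointwise version and then show that the $S$-irreducible hypothesis with its fixed witness yields a uniform bound. I expect this step to require the most care.
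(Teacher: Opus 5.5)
There are two gaps, and only the first can be closed.

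\textbf{The nilness gap in Step~2.} Theorem~\ref{j1} gives $S$-stationarity only for chains of nil ideals. The elements of $I\cap K$ are in fact nil: if $x=q+rb^k\in(Q:b^k)$ then $x^2=xq+r\,xb^k\in Q$. But what you must feed into the chain is the coefficient $r$, which need not be nilpotent. So ``$ur\in(Q:b^k)$'' is unjustified.

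The missing observation is that one factor of $b$ pushes the colon ideals into the nil part. If $rb^n\in Q\subseteq\mathrm{Nil}(R)$, then $(rb)^n=r^{n-1}(rb^n)$ is nilpotent, so
\[
b\,(Q:b^{n})\subseteq (Q:b^{n-1})\cap\mathrm{Nil}(R)\qquad(n\geq 1).
\]
Take $k,u$ from the $S$-stationarity of the chain $(Q:b^n)\cap\mathrm{Nil}(R)$. Then every $r\in(Q:b^n)$ satisfies $urb\in(Q:b^k)$, that is, $u(Q:b^n)\subseteq(Q:b^{k+1})$ for all $n$. Now run your Steps~2 and~3 with $I=Q+Rb^{k+1}$ and $K=(Q:b^{k+1})$. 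You get $u(I\cap K)\subseteq Q\subseteq I\cap K$, and then an element $t=us'\in S$ with $ta\in Q$ or $tb\in rad(Q)$.

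The paper runs the same classical argument, with the chain $A_n=(Q:a^n)$ and the ideals $(a^k)+Q$, $(b)+Q$. It disposes of this issue by asserting that each $A_n$ is nil via $y^k=(a^ny)^ka^{-nk}$. That step is invalid: $a$ is not invertible, and if $a$ is nilpotent then $A_n=R$ for large $n$. So do not follow the paper here; the displayed inclusion is what is needed there too.

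\textbf{The uniformity gap.} Your second concern, a single $s$ for all pairs $(a,b)$, cannot be resolved. With the fixed-$s$ definition of $S$-primary quoted from \cite{me22}, the statement is false. Take $R=\prod_{n\in\mathbb{N}}\mathbb{F}_2$, a free ultrafilter $\mathcal{U}$ on $\mathbb{N}$, and $S=\{e_A : A\in\mathcal{U}\}$, where $e_A$ is the characteristic function of $A$.

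Since $R$ is reduced, its only nil ideal is $0$. So $R$ is Nil-$S$-Noetherian, and $Q=0$ is a proper nil ideal disjoint from $S$.

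$Q=0$ is $S$-irreducible. Every element of $R$ has the form $e_A$. Suppose $e_C(I\cap K)=0$, and set $A_0=C\cap\bigcup\{A : e_A\in I\}$ and $B_0=C\cap\bigcup\{B : e_B\in K\}$. These are disjoint, since a common point $n$ would give $e_{\{n\}}\in I\cap K$ with $e_Ce_{\{n\}}\neq 0$. Hence one of them, say $A_0$, is not in $\mathcal{U}$. Then $s'=e_{\mathbb{N}\setminus A_0}\in S$ satisfies $e_Cs'I=0$.

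Yet $0$ is not $S$-primary. Every $s=e_C\in S$ has $C$ infinite. For distinct $m,n\in C$, the elements $a=e_{\{m\}}$ and $b=e_{\{n\}}$ satisfy $ab=0$, $sa\neq 0$ and $sb\notin rad(0)=0$.

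Every nil ideal here is $0$. So neither of your proposed routes can produce a uniform $s$: not Theorem~\ref{j1}(4) applied to nil colon ideals, and not extracting a bound from the irreducibility witness. What your repaired argument proves is the pointwise version, in which $s$ may depend on the pair. That is also all the paper's argument yields, since its $t=ss'$ depends on $a$ and $b$.
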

\begin{proof}
	Suppose $Q$ is an $S$-irreducible nil ideal of $R$. Let $a,b\in R$ be such that $ab\in Q$ and $sb\notin Q$ for all $s\in S$. Our aim is to show that there exists $t\in S$ such that $ta\in rad(Q)$. Consider $A_n=\{x\in R\hspace{0.2cm}|\hspace{0.1cm} a^{n}x\in Q \}$ for $n\in\mathbb{N}$. Let $y\in A_{n}$. Then $a^{n}y\in Q$. Since $Q$ is nil, there exists $k\in\mathbb{N}$ such that $(a^{n}y)^{k}=0$. Write $y^{k}=y^{k} a^{nk-nk}=(a^{n}y)^{k} a^{-nk}=0$. This implies that each $A_{n}$ is a nil ideal for all $n\in\mathbb{N}$, and $A_1\subseteq A_2\subseteq A_3\subseteq \cdots$ is an increasing chain of nil ideals of $R$. Since $R$ is a Nil-$S$-Noetherian, by Theorem \ref{j1}, this chain is $S$-stationary, i.e., there exist $k\in \mathbb{N}$ and $s\in S$ such that $sA_n\subseteq A_k$ for all $n\geq k$. Consider the two ideals $I=(a^{k}) +\hspace{0.1cm} Q$ and $K=(b) +\hspace{0.1cm} Q$ of $R$. Then  $Q\subseteq I\cap K$. For the reverse containment, let $u\in I\cap K$. Write  $u=a^{k}v+q$ for some $v\in R$ and $q\in Q$. Since $ab\in Q$, $aK\subseteq Q$; whence $au\in Q$. Now, $a^{k+1}v=a(a^{k}v)=a(u-q)\in Q$. This implies that $v\in A_{k+1}$, and so  $sv\in sA_{k+1}\subseteq A_k$. Consequently, $a^{k}sv\in Q $ which implies that  $a^{k}sv +sq=su\in Q$. Thus we have $s(I\cap K)\subseteq Q\subseteq I\cap K$. This implies that there exists $s'\in S$ such that either $ss'I\subseteq Q$ or $ss'K\subseteq Q$ since $Q$ is $S$-irreducible. If $ss'K\subseteq Q$, then $ss'b\in Q$  which is not possible. Therefore $ss'I\subseteq Q$ which implies that $ss'a^{k}\in Q$. Put $t=ss'\in S$. Then $(ta)^{k} \in Q$ and hence  $ta\in rad(Q)$, as desired.
\end{proof}

\begin{theorem}\label{on}
	\noindent
	Let $R$ be a Nil-$S$-Noetherian . Then every proper nil ideal of $R$ disjoint with $S$ can be written as a finite intersection of $S$-primary ideals.
\end{theorem}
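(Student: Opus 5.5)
The plan is to follow the classical Noetherian argument: reduce to $S$-irreducible ideals via an $S$-maximality argument, then apply Lemma~\ref{sp}. Let $\mathcal{F}$ be the family of proper nil ideals $I$ of $R$ with $I\cap S=\emptyset$ that \emph{cannot} be written as a finite intersection of $S$-primary ideals. Suppose, for a contradiction, that $\mathcal{F}\neq\emptyset$. Since $\mathcal{F}$ is a nonempty family of nil ideals, Theorem~\ref{j1}(4) gives an $S$-maximal element $Q\in\mathcal{F}$. That is, there is $s_0\in S$ such that whenever $J\in\mathcal{F}$ and $Q\subseteq J$, we have $s_0J\subseteq Q$.

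The main step is to show that $Q$ is $S$-irreducible. Once this is done, Lemma~\ref{sp} shows that $Q$ is $S$-primary, hence trivially a one-term intersection of $S$-primary ideals, contradicting $Q\in\mathcal{F}$.

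To verify $S$-irreducibility, suppose $s(I\cap K)\subseteq Q\subseteq I\cap K$ for some $s\in S$ and ideals $I,K$, and assume $ss'I\not\subseteq Q$ and $ss'K\not\subseteq Q$ for every $s'\in S$. The difficulty is that $I$ and $K$ need not be nil, so we cannot place them in $\mathcal{F}$. I would first try replacing them by $I'=I\cap \mathrm{Nil}(R)$ and $K'=K\cap \mathrm{Nil}(R)$. These are nil ideals containing $Q$, and they satisfy $I'\cap K'=I\cap K\cap\mathrm{Nil}(R)$. Since $Q$ is nil, $s(I\cap K)\subseteq Q$, and $Q\subseteq I'\cap K'$, we get $s(I'\cap K')\subseteq Q\subseteq I'\cap K'$.

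Consider $I'$. Since $Q\subseteq I'$ and $s_0I'\subseteq Q$ fails whenever $I'$ is "large", the $S$-maximality gives the following: if $I'\in\mathcal{F}$ then $s_0I'\subseteq Q$. The goal is to arrange that $I'\notin\mathcal{F}$, so that $I'$ and $K'$ are finite intersections of $S$-primary ideals. Then $Q$ is sandwiched: $s(I'\cap K')\subseteq Q\subseteq I'\cap K'$. The issue is that $I'$ might meet $S$; this cannot happen, since $I'$ is nil and $S$ contains no nilpotents (as $Q\cap S=\emptyset$ forces $0\notin S$).

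So each of $I',K'$ is either in $\mathcal{F}$, in which case $s_0I'\subseteq Q$, or is a finite intersection $\bigcap Q_i$ of $S$-primary ideals. The obstacle is the mismatch between nil-truncations and $I,K$: knowing $s_0I'\subseteq Q$ does not immediately give $ss'I\subseteq Q$. I expect this to be the hardest point. To handle it, I would change the definition of $S$-irreducibility used in the argument so that only nil $I,K$ are tested. This is harmless for the proof of Lemma~\ref{sp} only if the ideals used there, $(a^k)+Q$ and $(b)+Q$, are nil. When $Q$ is nil and $a$ or $b$ is nilpotent these ideals are nil, and the general case requires intersecting with $\mathrm{Nil}(R)$ as above.

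Finally, I would close the argument using the standard fact that if $s(A\cap B)\subseteq Q\subseteq A\cap B$ with $A,B$ finite intersections of $S$-primary ideals, then $Q$ itself is a finite intersection of $S$-primary ideals. To prove this, replace each $S$-primary component $Q_i$ by $(Q:s)$-type saturations $(Q_i:s)$, which remain $S$-primary with $Q=\bigcap_i(Q_i:s)\cap\cdots$, up to the standard $S$-saturation adjustments. This contradicts $Q\in\mathcal{F}$, so $\mathcal{F}=\emptyset$ and the theorem follows.
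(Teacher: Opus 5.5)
Your outline matches the paper's: take an $S$-maximal element $Q$ of the bad family via Theorem~\ref{j1}(4), show $Q$ is $S$-irreducible, and apply Lemma~\ref{sp}. You also identify the real difficulty correctly. The ideals $I,K$ witnessing a failure of $S$-irreducibility need not be nil, so $S$-maximality of $Q$ among nil ideals tells you nothing about them.

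The paper's proof skips exactly this point. From $I=K\cap L$ and $K,L\notin E$ it concludes that $K$ and $L$ are finite intersections of $S$-primary ideals, but $K\notin E$ may hold simply because $K$ is not nil.

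None of your three repairs closes the gap.
\begin{itemize}
\item Truncating to $I'=I\cap\mathrm{Nil}(R)$ throws away what you need: $s_0I'\subseteq Q$ says nothing about $ss'I$.
\item Testing $S$-irreducibility only on nil ideals breaks Lemma~\ref{sp}. Its proof needs $(a^k)+Q$ for an arbitrary $a$, and after intersecting with $\mathrm{Nil}(R)$ you can no longer conclude $ss'a^k\in Q$.
\item The closing ``standard fact'' is only asserted, and the saturation sketch goes the wrong way. With $C=A\cap B=\bigcap_i Q_i$ one has $\bigcap_i(Q_i:s)=(C:s)\supseteq C\supseteq Q$, so you recover something at least as large as $C$, not $Q$.
\end{itemize}

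In fact the gap cannot be closed, because the statement is false as written. Take $R=\prod_{n\ge1}k$ with $k$ a field, and $S=\{1\}$.
\begin{itemize}
\item $R$ is reduced, so its only nil ideal is $(0)$ and $R$ is Nil-$S$-Noetherian. With $S=\{1\}$, $S$-primary just means primary.
\item Suppose $(0)=Q_1\cap\cdots\cap Q_m$ with $Q_i$ primary and $P_i=\mathrm{rad}(Q_i)$, so $P_1\cap\cdots\cap P_m=0$.
\item Let $e_j$ be the $j$-th unit idempotent and $M_j=\{x\in R: x_j=0\}$, a prime ideal. Since $M_j\supseteq P_1\cdots P_m$, some $P_{i(j)}\subseteq M_j$.
\item If $i(j)=i(j')$ with $j\neq j'$, then $e_je_{j'}=0$ puts $e_j$ or $e_{j'}$ into $P_{i(j)}\subseteq M_j\cap M_{j'}$. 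This is impossible, since $e_j\notin M_j$ and $e_{j'}\notin M_{j'}$.
\item So $j\mapsto i(j)$ would inject $\mathbb{N}$ into $\{1,\dots,m\}$, a contradiction.
\end{itemize}

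This example sits exactly on the broken step. The bad family is $\{(0)\}$, and $(0)=eR\cap(1-e)R$ for any idempotent $e\neq0,1$. These witnesses are not nil, and their nil parts are both $(0)$. Moreover, $(0)$ vacuously passes your nil-only irreducibility test while not being primary. Any correct version needs a hypothesis that controls non-nil ideals (for instance, $R$ being $S$-Noetherian), not just nil ones.
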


\begin{proof}
	\noindent
	Let $E$ be the collection of nil ideals of $R$ which are disjoint with $S$ and can not be written as a finite intersection of $S$-primary ideals. We wish to show $E=\emptyset$. On the contrary suppose  $E\neq\emptyset$.  Since $R$ is Nil-$S$-Noetherian ring, by Theorem \ref{j1}, there exists an $S$-maximal element in $E$, say $I$. Evidently, $I$ is not an $S$-primary ideal, by Lemma \ref{sp}, $I$ is not an $S$-irreducible ideal, and so  $I$ is not an irreducible ideal. This implies that  $I=K\cap L$ for some ideals $K$ and $L$ of $R$ with $I\neq K$ and $I\neq L$. As $I$ is not $S$-irreducible, and so $sK\nsubseteq I$ and $sL\nsubseteq I$ for all $s\in S$. Now, we claim that $K$, $L\notin E$. For this, if $K$ (respectively, $L$) belongs to $E$, then since $I$ is an $S$-maximal element of $E$ and $I\subset K$ (respectively, $I\subset L$), there exists $s'$ (respectively, $s''$) from $S$ such that $s'K\subseteq I$ (respectively, $s''L\subseteq I$). This is not possible, as $I$ is not $S$-irreducible. Therefore $K, L\notin E$. Also, if $K\cap S\neq\emptyset$ $(respectively, L\cap S\neq\emptyset)$, then there exist $s_{1}\in K\cap S$  $(respectively, s_{2}\in L\cap S)$. This implies that $s's_{1}\in s'K\subseteq I$ $(respectively, s''s_{2}\in s''L\subseteq I)$, which is a contradiction because $I$ disjoint with $S$. Thus $K$ and $L$ are also disjoint with $S$. This implies that $K$ and $L$ can be written as a finite intersection of $S$-primary ideals. Consequently, $I$ can also be written as a finite intersection of $S$-primary ideals since  $I=K\cap L$, a contradiction as $I\in E$. Thus $E=\emptyset$, i.e., every proper nil ideal of $R$ disjoint with $S$ can be written as a finite intersection of $S$-primary ideals. 
\end{proof}
\noindent
\textbf{Conflict of Interest Statement:} The authors declare that they have no conflict of interest.\\
\textbf{Funding:} The authors did not receive any specific funding for this work.

\end{document}